
\documentclass[final,leqno]{siamltex}

\usepackage{graphicx} 
\usepackage{epsfig} 
\usepackage{times} 
\usepackage{amsmath} 
\usepackage{amssymb}  
\usepackage{amsfonts}
\usepackage{epstopdf}
\usepackage[caption=false]{subfig}
\usepackage[sort]{cite}
\usepackage[colorlinks=true]{hyperref}
\usepackage{comment}
\usepackage{mathtools}
\usepackage{eurosym}
\usepackage{multirow}
\usepackage{tikz}
\usetikzlibrary{arrows}

\newtheorem{remark}[theorem]{Remark}
\newtheorem{assumption}[theorem]{Assumption}
\newtheorem{problem}[theorem]{Problem}
\newtheorem{algorithm}[theorem]{Algorithm}
%
%
%
%
%
%
\newcommand{\sr}{\stackrel}

\newcommand{\tri}{\sr{\triangleq}{=}}

%
%
\newcommand{\be}{\begin{equation}}
\newcommand{\ee}{\end{equation}}
\newcommand{\bea}{\begin{eqnarray}}
\newcommand{\eea}{\end{eqnarray}}
\newcommand{\bes}{\begin{eqnarray*}}
\newcommand{\ees}{\end{eqnarray*}}
\newcommand{\bi}{\begin{itemize}}
\newcommand{\ei}{\end{itemize}}
\newcommand{\ben}{\begin{enumerate}}
\newcommand{\een}{\end{enumerate}}
\newcommand{\bp}{\begin{problem}}
\newcommand{\ep}{\end{problem}}
\newcommand{\hso}{\hspace{.1in}}
\newcommand{\hst}{\hspace{.2in}}

\newcommand{\hse}{\hspace{.05in}}

\newcommand{\argmin}{\arg\!\min}

\title{Infinite Horizon Average Cost Dynamic Programming Subject to Total Variation Distance Ambiguity}

\author{Ioannis~Tzortzis\thanks{Department of Electrical and Computer Engineering, University of Cyprus (UCY), Nicosia, Cyprus. ({\tt tzortzis.ioannis@ucy.ac.cy}).}
        \and Charalambos~D.~Charalambous\thanks{Department of Electrical and Computer Engineering, University of Cyprus (UCY), Nicosia, Cyprus. ({\tt chadcha@ucy.ac.cy}).}
\and Themistoklis~Charalambous\thanks{Department of Signals and Systems, Chalmers University of Technology, Gothenburg, Sweden. ({\tt themistoklis.charalambous@chalmers.se}).}}

\begin{document}
\maketitle

\begin{abstract}
We analyze the infinite horizon minimax average cost Markov Control Model (MCM), for a class of controlled process conditional distributions, which belong to a ball, with respect to total variation distance metric, centered at a known nominal controlled conditional distribution with radius $R\in [0,2]$, in which the minimization is over the control strategies and the maximization is over conditional distributions. Upon performing the maximization,   a dynamic programming equation is obtained which includes, in addition to the standard terms, the oscillator semi-norm of the cost-to-go. 

First, the dynamic programming equation is analyzed for finite state and control spaces. We show that if the nominal controlled process distribution is irreducible, then for every stationary Markov control policy the maximizing conditional distribution of the controlled process is also  irreducible for $R \in [0,R_{max}]$. \\
Second, the generalized dynamic programming is analyzed for Borel spaces. We derive necessary and sufficient conditions for  any control strategy to be optimal.

Through our analysis, new dynamic programming equations and new policy iteration algorithms are derived. The main feature of the new policy iteration algorithms (which are applied for finite alphabet spaces) is that the policy evaluation and policy improvement steps  are performed by using the maximizing conditional distribution, which is obtained via a water filling solution.
Finally,  the application of the new dynamic programming equations and the corresponding policy iteration algorithms are shown via illustrative examples.

\end{abstract}

\begin{keywords} Stochastic Control, Markov Control Models, Minimax, Dynamic Programming,  Average Cost, Infinite Horizon, Total Variational Distance, Policy Iteration\end{keywords}

\begin{AMS}  93E20, 90C39, 90C47\end{AMS}

\pagestyle{myheadings}
\thispagestyle{plain}
\markboth{I. TZORTZIS, C. D. CHARALAMBOUS AND T. CHARALAMBOUS}{Infinite Horizon Average Cost Dynamic Programming}

\section{Introduction}\label{sec.intro}
The infinite horizon average cost per unit-time discrete-time Markov Control Model (MCM), with deterministic strategies is analysed, in an anthology of papers \cite{Arapostathis93,Borkar84,Borkar88,Borkar89,Sennott95}.
In such MCMs, the corresponding cost-to-go and the dynamic programming recursions depend on the conditional distribution of the underlying controlled process \cite{caines88}. This means, any ambiguity of the controlled process conditional distribution will affect the optimality and robustness of the optimal decision strategies.

In this paper, we investigate the effects of any ambiguity in the controlled process conditional distribution on the cost-to-go and dynamic programming. We model the ambiguity in the controlled conditional distributions by a ball with respect to the total variation distance metric, centered at a known nominal controlled conditional distribution with radius $R\in[0,2]$. Then, we re-formulate the infinite horizon average cost MCM using minimax optimization techniques, in which the control strategy seeks to minimize the payoff while the conditional distribution, from the class of  total variation distance ball, seeks to maximize it.

 We begin our analysis by first considering MCM's defined on finite state and control spaces. By employing certain results from \cite{ctlthem2013j}, we obtain the characterization of  the maximizing conditional distribution and  the corresponding dynamic programming equation.  The main feature of the maximizing conditional distribution is its  characterization via a water-filling solution,  which is similar in spirit, to extremum problems encountered in information theory, such as,  channel capacity and lossy data compression \cite{cover}. This leads to a dynamic programming equation, which includes in its right hand side, the oscillator semi-norm of the cost-to-go or value function, in addition to the standard terms. We show that, if the nominal controlled process distribution is irreducible, then for every stationary Markov control policy the maximizing conditional distribution of the controlled process is also  irreducible, the optimal control strategies exists,  for $R \in [0,R_{max}]$. Moreover, for this range of $R$, we derive a new policy iteration algorithm.   
 
Subsequently, we consider general Borel spaces, we invoke  a pair of dynamic programming equations (called generalized), and we derive necessary and sufficient conditions of optimality,  based on the concept of canonical triplets  \cite{dynkin79,Schal92,hern1996discrete,puterman94}. This treatment characterizes optimal strategies  for any ball of radius $R\in[0,2]$.   The main feature of the corresponding policy iteration algorithm (which  is applied for finite alphabet spaces), is that the policy evaluation and policy improvement steps  are performed using the maximizing conditional distribution.
 
The remainder of the paper is organized as follows. In Section \ref{subsec.discuss.main.res}, we introduce the classical infinite horizon dynamic programming equation of MCM with an average cost per unit-time optimality criterion, and we briefly discuss the main results derived in the paper. In Section \ref{sec.minimaxstochastic.control}, we give some preliminary results concerning the maximization of a linear functional subject to total variation distance. In Section \ref{sec.minimaxstoch.control}, we study the infinite horizon average cost Markov decision problem for finite state and control spaces, and we derive a new dynamic programming recursion and the corresponding policy iteration algorithm. In Section \ref{subsec.inf_ave_gener_DP}, we consider general Borel spaces, and we investigate  the infinite horizon average cost Markov decision problem, using  the generalized dynamic programming equations. We also introduce a generalized policy iteration algorithm when the state and control spaces are of finite cardinality. In Section \ref{sec.examples}, we present two examples which illustrate the implications of the the new dynamic programming recursions on the corresponding policy iteration algorithms.

\subsection{Discussion on the Main Results}\label{subsec.discuss.main.res}
In this section, we describe the main results obtained in the paper with respect to the existing literature. Since we treat finite alphabet spaces and Borel spaces, the formulation below is introduced for Borel spaces.

\subsubsection{Dynamic Programming of Infinite-Horizon MCM} An infinite horizon MCM with deterministic strategies is a five-tuple
\begin{equation}\label{inf.hor.mcm.def}
\Big( {\cal X},  {\cal U}, \{ {\cal U}(x): x\in {\cal X}\},\{ Q(dz| x, u){:} (x, u) {\in} {\cal X} {\times} {\cal U} \},f\Big)
\end{equation}
consisting of the following.
\ben
\item[a)] State Space. A complete separable metric space (called a Polish space) ${\cal X}$, which models the state space of the controlled random process $\{x_k\in {\cal X}:k\in \mathbb{N}\}$, $\mathbb{N}\triangleq 0,1,\dots$.
\item[b)] Control or Action Space. A Polish space $ {\cal U}$, which models the control or action set of the control random process $\{u_k\in {\cal U}:k\in \mathbb{N}\}$.
\item[c)] Feasible Controls or Actions. A family $\{ {\cal U}(x): x \in {\cal X}\}$ of non-empty measurable subsets ${\cal U}(x)$ of ${\cal U}$, where ${\cal U}(x)$ denotes the set of feasible controls or actions, when the controlled process is in state $x \in {\cal X}$, and  the feasible state-actions pairs are measurable subsets of  ${\cal X} \times {\cal U}$, defined by \begin{equation}\label{feas.state.action.pair}
{\mathbb K} \triangleq \Big\{ (x, u): x \in {\cal X}, u \in {\cal U}(x)\Big\}. \end{equation} 
\item[d)] Controlled Process Distribution. A conditional distribution or stochastic kernel $Q(dz|x, u)$ on ${\cal X}$
given $(x, u) \in {\mathbb K} \subseteq {\cal X} \times {\cal U}$, which corresponds to the controlled process transition probability distribution.
\item[e)] One-Stage-Cost. A non-negative measurable function $f: {\mathbb K} \longmapsto [0, \infty]$, called the one-stage-cost, such that $f(x,\cdot)$ does not take the value $+\infty$ for each $x \in {\cal X}$. 
\een

To ensure the existence of measurable controls we make the following assumption.
\begin{assumption} \cite{hern1996discrete} ${\mathbb K}$ contains the graph of a measurable functions from ${\cal X}$ to ${\cal U}$; that is, there is a measurable function $\varphi:{\cal X}\longmapsto {\cal U}$ such that $\varphi(x)\in {\cal U}(x)$, for all $x\in {\cal X}$. The set of all such functions denoted by $\mathbb{F}$ are called selectors of the multifunction $x\longmapsto {\cal U}(x)$.
\end{assumption}

 We equip the spaces ${\cal X}$ and ${\cal U}$ with the natural $\sigma$-algebra ${\cal B}({\cal X})$ and ${\cal B}({\cal U})$, respectively. For any measurable spaces $({\cal X}, {\cal B}({\cal X})), ({\cal U}, {\cal B}({\cal U}))$, we denote the set of stochastic Kernels on $({\cal X}, {\cal B}({\cal X}))$  conditioned on $({\cal U}, {\cal B}({\cal U}))$ by ${\cal Q}({\cal X}|{\cal U})$, and we denote the set of probability distributions on $({\cal X}, {\cal B}({\cal X}))$ by ${\cal M}_1({\cal X})$. Next, we give the definition of deterministic stationary Markov control policies.

\begin{definition}
A deterministic stationary Markov control policy is a measurable function (selector) $g:{\cal X}\longmapsto {\cal U}$ such that $g(x_t)\in {\cal U}(x_t)$, $\forall x_t\in {\cal X}$, $t=0,1,\dots$. The set of such deterministic stationary Markov policies is denoted by $G_{SM}$, and the set of all deterministic control policies (i.e., non-stationary, non-Markov) is denoted by $G$.
\end{definition}

Define the $n$-stage expected cost, for a fixed $x_0=x$, by
\begin{equation}\label{nstage.cost}
J^o_n(g,x)\triangleq\mathbb{E}_x^{g}\Big\{\sum_{k=0}^{n-1}f(x_k,u_k)\Big\}
\end{equation}
where $\mathbb{E}_x^{g}\{\cdot\}$ indicates the dependence of the expectation operation on the policy $g\in G$ and $x_0=x$. Then, the average cost per unit-time when policy $g\in G$ is used, given $x_0=x$, is defined by
\begin{equation}\label{prel.perf.criter}
J^o(g,x)\triangleq\limsup_{n\rightarrow \infty}\frac{1}{n}J^o_n(g,x).
\end{equation}
The Markov Control Problem (MCP) is to find a control policy $g^*\in G$ such that
\begin{equation}\label{markov.dec.problem}
J^o(g^*,x)\triangleq\inf_{g\in G}J^o(g,x)=J^{o,*}(x),\quad \forall x\in {\cal X}.
\end{equation}

For finite cardinality spaces $({\cal X},{\cal U})$, it is known \cite{varayia86,puterman94,hern1996discrete,vanSchuppen10}, that if $f$ is bounded and for all stationary Markov control policies $g\in G_{SM}$ the transition probability matrix $Q(z|x,u)$ is irreducible (that is, all stationary policies have at most one recurrent class), then there exists a solution $V^o:{\cal X}\longmapsto \mathbb{R}$ and a constant (independent of $x\in{\cal X}$) $J^{o,*}\in \mathbb{R}$ such that $(J^{o,*},V^o(x))$ is the solution of the dynamic programming (of the infinite-horizon MCP \eqref{markov.dec.problem})
\begin{equation}\label{prel.perf.criter.dp}
J^{o,*}+V^o(x)=\inf_{u\in {\cal U}(x)}\Big\{f(x,u)+\sum_{z\in {\cal X}}Q(z|x,u)V^o(z)\Big\}
\end{equation}
from which existence of optimal policy $g^*\in G_{SM}$ is obtained. However, if the irreducibility condition is not satisfied (i.e., there is more than one recurrent class), then the dynamic programming equation \eqref{prel.perf.criter.dp} may not be sufficient to give the optimal policy and the minimum cost\cite{puterman94,hern1996discrete}. In this case, \eqref{prel.perf.criter.dp} is replaced by the following 
\begin{subequations}\label{cla.DP}
\begin{align}
J^{o,*}(x)&=\inf_{u\in {\cal U}(x)}\Big\{\sum_{z\in{ \cal X}}Q(z|x,u)J^{o,*}(z)\Big\}\label{cla.DP.a}\\
J^{o,*}(x)+V^o(x)&=\inf_{u\in {\cal U}(x)}\Big\{f(x,u)+\sum_{z\in{\cal X}}Q(z|x,u)V^o(z)\Big\}.\label{cla.DP.b}
\end{align}
\end{subequations}
We refer to \eqref{cla.DP.a} as the first general dynamic programming equation and to \eqref{cla.DP.b} as the second general dynamic programming equation\footnote{Some authors use the term multichain, instead.}. Note that, the pair of generalized dynamic programming equations \eqref{cla.DP.a}-\eqref{cla.DP.b} solves the MCP \eqref{markov.dec.problem}, without imposing irreducibility of the conditional distribution of the controlled process \cite{puterman94,hern1996discrete}. Similar results are also known for Borel spaces, by replacing summations in \eqref{prel.perf.criter.dp} and \eqref{cla.DP} by integrals with respect to conditional distribution, while the characterization of the existence of optimal policies is done via canonical triplets \cite{hern1996discrete}.

Since the MCP \eqref{markov.dec.problem} and the dynamic programming equation \eqref{prel.perf.criter.dp} are functionals of the conditional distribution of the controlled process, then the optimal strategies $g\in G$ are obtained based on the assumption of having an accurate knowledge of the conditional distribution $Q(dz|x,u)$.  Hence, any ambiguity or mismatch of $Q(dz|x,u)$ from the true conditional distribution will affect the optimality of the strategies.

Motivated by this implication, in this paper we consider the problem discussed in the next section.

\subsubsection{Dynamic Programming of Infinite-Horizon MCM with Total Variation Distance Ambiguity}
Recall the total variation distance between two probability measures, $|| \cdot ||_{TV}: {\cal M}_1({\cal X}) \times {\cal M}_1({\cal X})\longmapsto [0,\infty]$, defined by 
\begin{equation*}
||\alpha-\beta ||_{TV} \triangleq \sup_{ P \in {\cal P}({\cal X})} \sum_{F_i \in P} |\alpha (F_i)- \beta(F_i)|,\hso \alpha, \beta\in {\cal M}_1({\cal X})
\end{equation*}
where ${\cal P}({\cal X})$ denotes the collection of all finite partitions of ${\cal X}$. 

In this paper, we will derive the analogues of \eqref{prel.perf.criter.dp} and \eqref{cla.DP.a}-\eqref{cla.DP.b}, for the class of conditional distributions of the controlled process 
$Q(dz|x,u)$, $(x,u)\in \mathbb{K}$ which are stationary, and belong to a ball with respect to total variation distance metric, centered at a nominal controlled process distribution $Q^o(dz|x,u)$, $(x,u){\in} \mathbb{K}$, having radius $R\in [0,2]$ (specifically, $\{Q(dz|x,u):||Q(\cdot|x,u)-Q^o(\cdot|x,u)||_{TV}\leq R\}$).

The precise definition is the following.
\begin{definition}\label{dt} 
For each $g\in G_{SM}$, the nominal controlled process $\{x^g_t: t=0,1,\dots\}$ has a stationary conditional distribution defined by 
\begin{equation*}
Prob(x_{t} \in A| x^{t-1},u^{t-1}   )\triangleq Q^o(A| x_{t-1}, u_{t-1}), \ \forall A \in {\cal B}({\cal X}),\ t=0,1,\dots
\end{equation*}
where $Q^o(\cdot| \cdot, \cdot) \in {\cal Q}({\cal X} | \mathbb{K})$. Given the nominal controlled process and  $R \in [0,2]$, the true controlled process conditional distributions are stationary, and belong to the total variation distance ball defined by 
\begin{equation}\label{ambiguityclass.controlled}
  \mathbf{B}_{R}(Q^o) (x,u) \triangleq \Big\{ { Q}(\cdot| x,u) \in {\cal M}_1({\cal X}):|| { Q}(\cdot|  x,u) -Q^o(\cdot|  x,u) ||_{TV}  \leq R \Big\}, 
  (x,u)\in \mathbb{K}. 
\end{equation}
\end{definition}

Next, we consider the analogue of \eqref{markov.dec.problem}. Define the $n$-stage expected cost by
\begin{equation}
J_n(g,Q,x)\triangleq\mathbb{E}_x^{g}\Big\{\sum_{k=0}^{n-1}f(x_k,u_k)\Big\}
\end{equation}
and the corresponding maximizing $n$-stage expected cost by
\begin{equation}\label{payoff.int1}
J_n(g,x)\triangleq \sup_{  {{ Q}(\cdot| x, u)  \in \mathbf{B}_{R}(Q^o)(x,u)}}\mathbb{E}_x^{g}\Big\{\sum_{k=0}^{n-1}f(x_k,u_k)\Big\}.
\end{equation}
Then, the maximizing average cost per unit-time when policy $g\in G$ is used, given $x_0=x$, is defined by
\begin{equation}\label{payofff}
J(g,x)\triangleq\limsup_{n\rightarrow\infty} \frac{1}{n}J_n(g,x).
\end{equation}

The minimax MCP subject to ambiguity defined by \eqref{ambiguityclass.controlled}, is to choose a control policy $g^*\in G$ such that

\begin{equation}\label{markov.dec.problem.amb}
J(g^*,x)\triangleq\inf_{g\in G}J(g,x)=J^*(x),\quad \forall x\in {\cal X}.
\end{equation} 
A conditional distribution $Q^*$ that satisfies \eqref{payofff} (see also \eqref{payoff.int1}) is called a maximizing conditional distribution, a policy $g^*$ that satisfies \eqref{markov.dec.problem.amb} is called an average cost optimal policy, and the corresponding $J^*(\cdot)$ is the minimum cost or value function of the minimax MCP.
%

Next, we introduce an assumption for the minimax MCP defined by \eqref{markov.dec.problem.amb}. 
\begin{assumption}\label{assump.cost}
\ben \item[(a)] The map $f:{\cal X}\times {\cal U}\longmapsto \mathbb{R}$ is bounded, continuous and non-negative.
\item[(b)] The set ${\cal U}(x)$ is compact for all $x\in {\cal X}$.
\item[(c)] The map $Q^o(A|\cdot,\cdot)$ is  continuous on $\mathbb{K}$ for every Borel set.
\een
\end{assumption}

Note that it is possible to relax Assumption \ref{assump.cost}, for example, $f(x,\cdot)$ can be replaced by a lower semi-continuous function on ${\cal U}(x)$ for every $x\in {\cal X}$, which is non-negative (see \cite{hern1996discrete} for several relaxations).

We derive the following results.

\textbf{Dynamic Programming Equations for Finite Alphabet Spaces.}
In Section \ref{sec.minimaxstoch.control}, we assume that $({\cal X}, {\cal U})$ are of finite cardinality and we show that if for all stationary Markov control policies $g\in G_{SM}$, and for a given total variation parameter $R\in [0,2]$, the maximizing transition probability matrix $Q^*(g)$ is irreducible, then the dynamic programming equation corresponding to minimax MCP \eqref{markov.dec.problem.amb} is given by  
\begin{equation}\label{n.DP7}
J^*+V(x)=\min_{u\in\cal U}\Big\{f(x,u)
+\sum_{z\in\cal X}Q^o(z|x,u)V(z)
+\frac{R}{2}\big(\sup_{z\in\cal X}V(z)-\inf_{z\in\cal X}V(z)\big)\Big\}.
\end{equation}
The new term entering in the right side of \eqref{n.DP7} is the oscillator semi-norm of the future pay-off. 

\textbf{Generalized Dynamic Programming Equations for Borel Spaces.}
In Section \ref{subsec.inf_ave_gener_DP}, we assume that $({\cal X},{\cal U})$ are Borel spaces, and we utilize the concept of canonical triplets to establish existence of optimal strategies via the following generalized dynamic programming equations
\begin{subequations}\label{cla.DPro}
\begin{eqnarray}
J^*(x)&{=}&\inf_{u\in {\cal U}(x)}\Big\{\int_{\mathclap{{\ \cal X}}}Q^o(dz|x,u)J^*(z) {+}\frac{R}{2}\big(\sup_{z\in\cal X}J^*(z){-}\inf_{z\in\cal X}J^*(z)\big)\Big\}\label{cla.DPro.a}\\
\quad\qquad J^*(x){+}V(x)&{=}&\inf_{u\in {\cal U}(x)}\Big\{f(x,u){+}\int_{\mathclap{{\ \cal X}}}Q^o(dz|x,u)V(z) {+}\frac{R}{2}\big(\sup_{z\in\cal X}V(z){-}\inf_{z\in\cal X}V(z)\big)\Big\}.\label{cla.DPro.b}
\end{eqnarray}
\end{subequations}
Since Borel spaces include finite alphabet spaces, if  irreducibility condition is violated, then the existence of optimal strategies is characterized by the finite alphabet version of \eqref{cla.DPro.a}-\eqref{cla.DPro.b}.

In addition, we obtain the following.
\ben
\item Characterize the maximizing conditional distribution corresponding to the supremum in \eqref{payofff}.
\item Derive new policy iteration algorithms (applied for finite alphabet spaces), in which the policy evaluation and the policy improvement steps are performed by using the maximizing conditional distribution obtained under total variation distance ambiguity constraint.
\een
Finally, in Section \ref{sec.examples} we present illustrative examples based on \eqref{n.DP7} and \eqref{cla.DPro}.

%
%
%
%

\section{Maximization over Total Variation Distance Ambiguity}\label{sec.minimaxstochastic.control}
In this section, we recall certain results from \cite{ctlthem2013j}, concerning the characterization of the extremum problem of maximizing a linear functional subject to total variation distance ambiguity. We use these results to derive the new dynamic programming equations. 

Let $({\cal X},d_{\cal X})$ denote a complete, separable metric space (a Polish space), and $({\cal X}, {\cal B}({\cal X}))$ the corresponding measurable space, in which ${\cal B}({\cal X})$ is the $\sigma$-algebra generated by open sets in ${\cal X}$. Define the spaces
\begin{eqnarray*}
&&BC({\cal X})\triangleq \big\{\mbox{Bounded continuous functions}\hse\ell:{\cal X}\longmapsto{\mathbb R}: ||\ell||\triangleq\sup_{x\in{\cal X}}|\ell(x)|<\infty\big\}\\
&&BC^+({\cal X})\triangleq \big\{\ell\in BC({\cal X}):\ell\geq 0\big\}.
\end{eqnarray*}
For $\ell\in BC^+({\cal X})$, and $\mu\in {\cal M}_1({\cal X})$ fixed, then we have 
\begin{equation}
 L(\nu^*)\triangleq \sup_{||\nu-\mu||_{TV}\leq R}\int_{{\cal X}}\ell(x)\nu(dx)=\frac{R}{2}  \Big\{ \sup_{x \in {{\cal X}}} \ell(x)  -  \inf_{x \in {\cal X}} \ell(x) \Big\}    + \int_{{\cal X}}\ell(x)\mu(dx)\label{f2n}
\end{equation} 
where $R\in[0,2]$, ${ \nu}^*$ satisfies the constraint $||{ \xi}^*||_{TV}= ||{ \nu}^*-{ \mu}||_{TV} =R$, it is normalized  ${ \nu}^*({\cal X})=1$, and $\nu^*(A) \in [0,1]$ on any $A \in {\cal B}({\cal X})$. If ${\cal X}$ is a compact set, since $\ell(\cdot)\in BC^+({\cal X})$ then both the supremum and infimum are attained and they are finite. Define\footnote{We adopt the standard definitions; infimum (supremum) of an empty set to be $+\infty$ ($-\infty$).}
\begin{align*}
x^0 \in {\cal X}^0  &\triangleq  \Big\{ x \in \overline{{\cal X}}: \ell(x) = \sup \{\ell(x): x\in {\cal X}\} \equiv \ell_{\max}\Big\} \\
x_0 \in {\cal X}_0  &\triangleq \Big\{ x \in \overline{{\cal X}}: \ell(x) = \inf\{\ell(x): x\in {\cal X}\} \equiv \ell_{\min} \Big\}
\end{align*}
where $\overline{{\cal X}}$ denotes the closure\footnote{Closure of a set ${\cal X}$ consists of all points in ${\cal X}$ plus the limit points of ${\cal X}$.} of ${\cal X}$. Then, the pay-off ${ L}({ \nu}^*)$ can be written as
\begin{equation}\label{n2}
L( \nu^*)
= \int_{{\cal X}^0}     \ell_{\max}   \nu^*(dx) +   \int_{{\cal X}_0}   \ell_{\min}     \nu^*(dx)+   \int_{{\cal X} \setminus {\cal X}^0\cup {\cal X}_0} \ell(x) \mu(dx)  
\end{equation}
and the optimal distribution ${\nu}^*\in {\cal M}_1 ({\cal X})$, which satisfies the total variation constraint, is given by
\begin{subequations}\label{cond1}
\begin{align}
\int_{{\cal X}^0} \nu^*(dx)&= \mu({\cal X}^0) + \frac{R}{2}\in[0,1]\\
 \int_{{\cal X}_0} \nu^*(dx)&= \mu({\cal X}_0) - \frac{R}{2}\in[0,1]  \\
 \nu^*(A)&= \mu(A), \hso \forall A \subseteq {\cal X} \setminus {\cal X}^0\cup {\cal X}_0. 
\end{align}\end{subequations}
Note that, if ${\cal X}^0$ is empty then $\nu^*({\cal X}^0)=R/2$ and if ${\cal X}_0$ is empty then $\nu^*({\cal X}_0)=0$.

Next, we elaborate on the form of the maximizing measure for finite and countable alphabet spaces,  and its water filling behavior, since we use them to analyze infinite horizon MCP with finite state and control spaces.\\

\subsection{The Maximizing Measure for Finite and Countable Alphabet Spaces}\label{subsec.finite.extr}

Let ${\cal X}$ be a non-empty denumerable set endowed with the discrete topology. If the cardinality of ${\cal X}$ denoted by $|{\cal X}|$ is finite, then we can identify any $x\in {\cal X}$ by a unit vector in $\mathbb{R}^{|{\cal X}|}$. Define the set of probability vectors on ${\cal X}$ by 
\begin{equation}
\mathbb{P}({\cal X})\triangleq \Big\{p=(p_1,\dots,p_{|{\cal X}|}):p(x)\geq 0,x=1,\dots,|{\cal X}|,\sum_{x\in{\cal X}}p(x)=1 \Big\}.
\end{equation}
That is, $\mathbb{P}({\cal X})$ is the set of all $|{\cal X}|$-dimensional vectors which are probability vectors $\{\nu(x):x\in{\cal X}\}\in \mathbb{P}({\cal X})$, $\{\mu(x):x\in{\cal X}\}\in\mathbb{P}({\cal X})$. Also, let $\ell \triangleq\{\ell(x):x\in{\cal X}\}\in \mathbb{R}_+^{|{\cal X}|}$ (i.e., the set of non-negative vectors of dimension $|{\cal X}|$). Then,  \eqref{f2n} may be written as follows 
\begin{equation}\label{eq.finite.payoff}
L(\nu^*)=\max_{\nu\in \mathbb{B}_R(\mu)}\sum_{x\in{\cal X}}\ell(x)\nu(x)
\end{equation}
where \begin{equation}
\mathbb{B}_R(\mu)\triangleq \Big\{\nu\in\mathbb{P}({\cal X}):||\nu-\mu||_{TV}\triangleq \sum_{x\in{\cal X}}|\nu(x)-\mu(x)|\leq R\Big\}.
\end{equation}

 By defining $\xi(x)\triangleq \nu(x)-\mu(x)$, $x=1,\dots,|{\cal X}|$, then $\sum_{x\in{\cal X}}\xi(x)=0$, and $||\xi||_{TV}=\xi^+({\cal X})+\xi^-({\cal X})$ denotes the total variation of $\xi$, where $\xi^+=\max\{\xi,0\}$ and $\xi^-=\max\{-\xi,0\}$ stand for the positive and negative part of $\xi$, respectively. Therefore,
\begin{equation*}
\sum_{x\in {\cal X}}\xi(x)=\sum_{x\in{\cal X}}\xi^+(x)-\sum_{x\in {\cal X}}\xi^-(x),\quad ||\xi||_{TV}=\sum_{x\in {\cal X}}|\xi(x)|=\sum_{x\in{\cal X}}\xi^+(x)+\sum_{x\in {\cal X}}\xi^-(x)
\end{equation*}
and hence $\sum_{x\in{\cal X}}\xi^+(x)\equiv\alpha/2\equiv\sum_{x\in{\cal X}}\xi^-(x)$. In addition, since
\begin{equation}
\sum_{x\in{\cal X}}\ell(x)\xi(x)=\sum_{x\in{\cal X}}\ell(x)\xi^+(x)-\sum_{x\in{\cal X}}\ell(x)\xi^-(x)
\end{equation}
then \eqref{eq.finite.payoff} can be reformulated as follows
\begin{equation}\label{eq.maxm.reform}
\max_{\nu\in \mathbb{B}_R(\mu)}\sum_{x\in{\cal X}}\ell(x)\nu(x)\longrightarrow \sum_{x\in{\cal X}}\ell(x)\mu(x)+\max_{\xi\in \widetilde{\mathbb{B}}_R(\mu)}\sum_{x\in{\cal X}}\ell(x)\xi(x)
\end{equation}
where $\xi\in \widetilde{\mathbb{B}}_R(\mu)$ is described by the constraints 
\begin{equation}
\alpha\triangleq \sum_{x\in{\cal X}}|\xi(x)|\leq R,\quad \sum_{x\in {\cal X}}\xi(x)=0,\quad 0\leq \xi(x)+\mu(x)\leq 1,\quad \forall x\in{\cal X}.
\end{equation}
The solution of \eqref{eq.maxm.reform} is obtained by first identifying the partition of ${\cal X}$ into disjoint sets $({\cal X}^0,{\cal X}\setminus{\cal X}^0)$, and then by finding upper and lower bounds on the probabilities of ${\cal X}^0$ and ${\cal X}\setminus{\cal X}^0$, which are achievable \cite{ctlthem2013j}.

Towards this end, define the maximum and minimum values of $\{\ell(x):x\in{\cal X}\}$ by  
\begin{equation}
\ell_{\max}\triangleq \max_{x\in{\cal X}}\ell(x),\quad \ell_{\min}\triangleq \min_{x\in{\cal X}}\ell(x)\nonumber\end{equation}
 and their corresponding support sets by
\begin{equation}
 {\cal X}^0 \triangleq \big\{x\in {\cal X}:\ell(x)=\ell_{\max} \big\},\quad {\cal X}_0\triangleq\big\{x\in {\cal X}:\ell(x)=\ell_{\min} \big\}.\nonumber
\end{equation}
For all remaining sequence, $\big\{\ell(x):x\in {\cal X} \setminus {\cal X}^0\cup{\cal X}_0\big\}$, and for $1\leq r\leq |{\cal X}\setminus {\cal X}^0\cup {\cal X}_0|$, define recursively the set of indices for which the sequence achieves its $(k+1)^{th}$ smallest value by 
\begin{equation}\label{Sigmaksets}
{\cal X}_k {\triangleq}\Big\{x{\in} {\cal X}{:}\ell(x){=}\min\big\{\ell(\alpha){:} \alpha \in {\cal X}\setminus {\cal X}^0\cup(\bigcup_{j=1}^k{\cal X}_{j-1})\big\} \Big\},\quad k\in\{1,2,\hdots,r\}
\end{equation}
 till all the elements of ${\cal X}$ are exhausted. Further, define the corresponding values of the sequence on sets ${\cal X}_k$ by
 \begin{equation} \label{ellSigmaksets}
 \ell({\cal X}_k)\triangleq\min_{x\in{\cal X}\setminus{\cal X}^0\cup(\bigcup_{j=1}^k{\cal X}_{j-1})}\ell(x),\hst k\in\{1,2,\hdots,r\}
 \end{equation}
 where $r$ is the number of ${\cal X}_k$ sets which is at most $|{\cal X}\setminus{\cal X}^0\cup{\cal X}_0|$. 

From \cite{ctlthem2013j} we have the following. The maximum pay-off subject to the total variation constraint is given by 
 \begin{equation} { L}( { \nu}^*)=\ell_{\max}\nu^*({\cal X}^0)+\ell_{\min}\nu^*({\cal X}_0)+\sum_{k=1}^r\ell({\cal X}_k)\nu^*({\cal X}_k). \label{mpoff1}   
 \end{equation}
Moreover, the optimal probabilities are given by the following equations (water-filling solution).
\begin{subequations}\label{all3}
\begin{align}
&\nu^*({\cal X}^0)= \mu({\cal X}^0)+\frac{\alpha}{2}\label{all3a}\\
&\nu^*({\cal X}_0)= \Big(\mu({\cal X}_0)-\frac{\alpha}{2}\Big)^+\label{all3b}\\
&\nu^*({\cal X}_k)= \Big(\mu({\cal X}_k)-\Big(\frac{\alpha}{2}-\sum_{j=1}^k\mu({\cal X}_{j-1})\Big)^+\Big)^+\label{all3c}\\
&\alpha = \min\Big(R,2(1-\mu({\cal X}^0))\Big)\label{all3d}
\end{align}\end{subequations}
where $R\in[0,2]$, $k\in\{1,2,\ldots,r\}$ and $r$ is the number of ${\cal X}_k$ sets which is at most $|{\cal X}\setminus{\cal X}^0\cup{\cal X}_0|$.

The above discussion also holds for countable alphabet spaces $({\cal X},{\cal U})$. Next, we apply the above results to the minimax MCP defined by  \eqref{markov.dec.problem.amb}.

\section{Minimax Stochastic Control for Finite State and Control Spaces}\label{sec.minimaxstoch.control}
In this section, we investigate the infinite horizon minimax MCP  defined by \eqref{markov.dec.problem.amb} for finite state and control spaces. By employing the results of Section \ref{sec.minimaxstochastic.control}, we derive dynamic programming equation \eqref{n.DP7} and we introduce the corresponding policy iteration algorithm.

Consider the problem of minimizing the finite horizon version of \eqref{payofff} defined by 
\begin{equation}\label{an.probl.fin.hor}
J^*_{n}(x)=\inf_{u\in {\cal U}(x)}\sup_{  {{ Q}(\cdot| x, u)  \in \mathbf{B}_{R}(Q^o)(x,u)}}{\mathbb E}_x^g\Big\{\sum_{k=0}^{n-1}f(x_k,u_k)\Big\}.
\end{equation}
Let $V:{\cal X}\longmapsto\mathbb{R}$ denote the value function corresponding to \eqref{an.probl.fin.hor}. Then $V$ satisfies the dynamic programming equation \cite{2014arXiv1402.1009T,ctc2012}
\begin{subequations}
\begin{eqnarray}
V_n(x)&=& 0,\quad \forall x\in{\cal X}\\
V_j(x)&=&\inf_{u\in{\cal U}(x)}\sup_{Q(\cdot|x,u)\in \mathbf{B}_R(Q^o)(x,u)}\nonumber\\ [-1.5ex]\label{eq.inf.average.dp.init}\\[-1.5ex]
&&\Big\{f(x,u)+\sum_{z\in {\cal X}}V_{j+1}(z)Q(z|x,u)\Big\},\ j=0,1,\dots,n-1,\ x\in {\cal X}.\nonumber
\end{eqnarray}\end{subequations}
By applying \eqref{f2n}, with $\ell(\cdot)=V_{j+1}(\cdot)$ and $\mu(\cdot)=Q^o(\cdot|x,u)$, then \eqref{eq.inf.average.dp.init} is equivalent to the dynamic programming equation 
\begin{equation}\label{eq.inf.average.dp.equiv}
V_j(x)=\inf_{u\in{\cal U}(x)}\Big\{f(x,u){+}\sum_{ z\in {\cal X}}V_{j+1}(z)Q^o(z|x,u){+}\frac{R}{2}\Big(\sup_{z\in{\cal X}}V_{j+1}(z){-}\inf_{z\in{\cal X}}V_{j+1}(z)\Big)\Big\}.
\end{equation}
Moreover, by applying \eqref{all3} with $\nu^*(\cdot)=Q^*(\cdot|x,u)$, where $Q^*(\cdot|x,u)$, $(x,u)\in \mathbb{K}$ is the maximizing conditional distribution and $\mu(\cdot)=Q^o(\cdot|x,u)$, $(x,u)\in\mathbb{K}$, then \eqref{eq.inf.average.dp.init} is equivalent to
\begin{equation}
V_j(x)=\inf_{u\in{\cal U}(x)}\Big\{f(x,u){+}\sum_{z\in {\cal X}}V_{j+1}(z)Q^*(z|x,u)\Big\}.
\end{equation}

Define $\overline{V}_j(x)=V_{n-j}(x)$. Then from \eqref{eq.inf.average.dp.init}, $\overline{V}_j(\cdot)$ satisfies the equation 
\begin{eqnarray}
\overline{V}_j(x)&=&\inf_{u\in{\cal U}(x)}\sup_{Q(\cdot|x,u)\in \mathbf{B}_R(Q^o)(x,u)}\nonumber\\ [-1.5ex]\label{equiv.form.c}\\[-1.5ex]
&&\Big\{f(x,u)+\sum_{z\in {\cal X}}\overline{V}_{j-1}(z)Q(z|x,u)\Big\},\ j=0,1,\dots,n-1.\nonumber
\end{eqnarray}
We rewrite \eqref{equiv.form.c} as follows.
\begin{eqnarray}
&&\overline{V}_j(x)+\frac{1}{j}\overline{V}_j(x)\nonumber\\ [-1.5ex]\label{Vbar.eq1}\\[-1.5ex]
 &&=\inf_{u\in{\cal U}(x)}\sup_{Q(\cdot|x,u)\in \mathbf{B}_R(Q^o)(x,u)}\Big\{f(x,u)+\sum_{z\in {\cal X}}Q(z|x,u)\Big(\overline{V}_{j-1}(z)+\frac{1}{j}\overline{V}_j(x)\Big)\Big\}.\nonumber
\end{eqnarray}

Next, we introduce the following standard assumption \cite{varayia86}.
\begin{assumption}\label{assupt.1}
There exists a pair $(V(\cdot),J^*)$, $V:{\cal X}\longmapsto \mathbb{R}$ and $J^*\in\mathbb{R}$, such that
\begin{equation}\label{a.limitVtoJ}
\lim_{j\rightarrow\infty}\big(\overline{V}_j(x)-jJ^*\big)=V(x),\quad \forall x\in {\cal X}.
\end{equation}
\end{assumption}

Under Assumption \ref{assupt.1}, then \begin{equation}\label{limitVtoJ}
\lim_{j\rightarrow\infty}\frac{1}{j}\overline{V}_j(x)=J^*,\quad \forall x\in {\cal X}
\end{equation}
and the limit does not depend on $x\in {\cal X}$. In addition, by taking the supremum with respect to $x\in {\cal X}$ on both sides of \eqref{a.limitVtoJ}, by virtue of the the finite cardinality of ${\cal X}$, we can exchange the limit and the supremum to obtain
\begin{equation}\label{b.limitVtoJ}
\lim_{j\rightarrow\infty}\sup_{x\in {\cal X}}\big(\overline{V}_j(x)-jJ^*\big)=\sup_{x\in {\cal X}}\lim_{j\rightarrow\infty}\big(\overline{V}_j(x)-jJ^*\big)=\sup_{x\in {\cal X}}V(x).
\end{equation}

By Assumption \ref{assupt.1} and by \eqref{limitVtoJ} we have the following identities.
\begin{align*}
&J^*+V(x)\\
=&\lim_{j\rightarrow\infty}\Big(\frac{1}{j}\overline{V}_j(x)+(\overline{V}_j(x)-jJ^*)\Big)\\
\overset{(a)}=&\lim_{j\rightarrow\infty}\inf_{u\in{\cal U}(x)}
  \sup_{Q(\cdot|x,u)\in \mathbf{B}_R(Q^o)(x,u)}\Big\{f(x,u){+}\sum_{z\in {\cal X}}Q(z|x,u)\Big(\overline{V}_{j-1}(z)+\frac{1}{j}\overline{V}_j(x)\Big){-}jJ^*\Big\}\\
\overset{(b)}=&\lim_{j\rightarrow\infty}\inf_{u\in{\cal U}(x)}\Big\{f(x,u)-jJ^*+\sum_{z\in {\cal X}}Q^o(z|x,u)\Big(\overline{V}_{j{-}1}(z)+\frac{1}{j}\overline{V}_j(x)\Big)\\
& +\frac{R}{2}\Big(\sup_{z{\in}{\cal X}}\Big( \overline{V}_{j-1}(z)+\frac{1}{j}\overline{V}_j(x)\Big){-}\inf_{z{\in}{\cal X}}\Big(\overline{V}_{j{-}1}(z)+\frac{1}{j}\overline{V}_j(x)\Big)\Big)\Big\} \\
\overset{(c)}=&\lim_{j\rightarrow\infty}\inf_{u\in{\cal U}(x)}\Big\{f(x,u)+\sum_{z\in{\cal X}}Q^o(z|x,u)\big(\overline{V}_{j-1}(z)-(j-1)J^*+\frac{1}{j}\overline{V}_j(x){-}J^*\big)\\
& +\frac{R}{2}\Big(\sup_{z\in{\cal X}}\big(\overline{V}_{j-1}(z)-jJ^*\big)-\inf_{z\in{\cal X}}\big(\overline{V}_{j-1}(z)-jJ^*\big)\Big)\Big\} 
\end{align*}
where 

(a) is obtained by using \eqref{Vbar.eq1};

(b) is obtained by using the equivalent formulation \eqref{eq.inf.average.dp.equiv}; 

(c) is obtained by adding and subtracting $J^*(1+j\frac{R}{2})$. 

\noindent Since ${\cal U}$ and ${\cal X}$ are of finite cardinality we can interchange the limit and the minimization and maximization operations, to arrive to the following dynamic programming equation. 
\begin{equation}\label{dpequation.average.payoff}
J^*+V(x)=\min_{u\in{\cal U}(x)}\Big\{f(x,u)+\sum_{z\in {\cal X}}Q^o(z|x,u)V(z)+\frac{R}{2}\Big(\sup_{z\in {\cal X}}{V}(z)-\inf_{z\in {\cal X}}{V}(z)\Big)\Big\}.
\end{equation}
Clearly, by \eqref{f2n}, dynamic programming equation \eqref{dpequation.average.payoff} is equivalently expressed as follows.
\begin{equation}\label{equiv2.dpequation.average.payoff}
J^*+V(x)=\min_{u\in{\cal U}(x)}\max_{ {{ Q}(\cdot| x, u)  \in \mathbf{B}_{R}(Q^o)(x,u)}}\Big\{f(x,u)+\sum_{z\in {\cal X}}Q(z|x,u)V(z)\Big\}.
\end{equation}

Next, we state the first main Theorem of this section.
\begin{theorem}\label{DP.inf.ave.lemmac}
Suppose ${\cal X}$ and ${\cal U}$ are of finite cardinality and Assumption \ref{assupt.1} holds. If there exists a solution $(V,J^*)$ to the dynamic programming equation \eqref{dpequation.average.payoff}, and $g^*$ is a stationary policy such that $g^*(x)$ attains the minimum in the right-hand side of \eqref{dpequation.average.payoff} for every $x$, then $g^*$ is an optimal policy and $J^*$ is the minimum average cost.
\end{theorem}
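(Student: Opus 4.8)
The plan is a verification (sufficiency) argument: I would show that the constant $J^*$ sandwiches the maximizing average cost $J(g,x)$ of \eqref{payofff} both from below over \emph{all} admissible policies and from above along the policy $g^*$. Concretely, I would prove (i) $J(g,x)\ge J^*$ for every $g\in G$, so that $J^*\le \inf_{g\in G}J(g,x)=J^*(x)$ in \eqref{markov.dec.problem.amb}, and (ii) $J(g^*,x)\le J^*$. Since $g^*\in G_{SM}\subseteq G$, (i) and (ii) together force $\inf_{g\in G}J(g,x)=J^*=J(g^*,x)$ for every $x\in{\cal X}$, which is exactly the claim. Throughout, finiteness of ${\cal X}$ guarantees that $\overline V\triangleq\sup_{z\in{\cal X}}V(z)$ and $\underline V\triangleq\inf_{z\in{\cal X}}V(z)$ are finite, and these will appear only as $O(1)$ boundary terms in the telescoping estimates, hence wash out after dividing by $n$.

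For (i), I would start from the minimax form \eqref{equiv2.dpequation.average.payoff} of the dynamic programming equation. By the results of Section~\ref{sec.minimaxstochastic.control}, applied with $\ell(\cdot)=V(\cdot)$ and $\mu(\cdot)=Q^o(\cdot|x,u)$, the inner maximum is attained at the water-filling kernel $Q^*(\cdot|x,u)$ of \eqref{all3}, which depends only on $(x,u)$ and therefore is a genuine stationary kernel in ${\cal Q}({\cal X}|{\mathbb K})$ lying in $\mathbf{B}_{R}(Q^o)(x,u)$. Discarding the outer minimum, $f(x,u)+\sum_{z}Q^*(z|x,u)V(z)\ge J^*+V(x)$ for all $(x,u)\in{\mathbb K}$. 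Now fix any $g\in G$; because $J_n(g,x)$ in \eqref{payoff.int1} is a supremum over admissible one-step kernels, it dominates the expected cost obtained when the controlled process is driven by the particular kernel $Q^*$. Taking conditional expectations under the $Q^*$-dynamics (so that $\mathbb{E}_x^g\{V(x_{k+1})\mid x^k,u^k\}=\sum_z Q^*(z|x_k,u_k)V(z)$), summing the pointwise inequality above over $k=0,\dots,n-1$, and telescoping, the right-hand side collapses to $nJ^*+V(x)-\mathbb{E}_x^g\{V(x_n)\}\ge nJ^*+V(x)-\overline V$. Hence $J_n(g,x)\ge nJ^*+V(x)-\overline V$, and dividing by $n$ and letting $n\to\infty$ in \eqref{payofff} gives $J(g,x)\ge J^*$.

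For (ii), I would use that $g^*(x)$ attains the minimum in \eqref{equiv2.dpequation.average.payoff}, so the inner maximum at the action $g^*(x)$ equals $J^*+V(x)$; consequently, for \emph{every} admissible kernel $Q(\cdot|x,g^*(x))\in\mathbf{B}_{R}(Q^o)(x,g^*(x))$ one has $f(x,g^*(x))+\sum_z Q(z|x,g^*(x))V(z)\le J^*+V(x)$. Fixing an arbitrary admissible collection of one-step kernels along the $g^*$-trajectory, taking expectations, summing over $k=0,\dots,n-1$ and telescoping exactly as before yields $\mathbb{E}_x^{g^*}\{\sum_{k=0}^{n-1}f(x_k,u_k)\}\le nJ^*+V(x)-\mathbb{E}_x^{g^*}\{V(x_n)\}\le nJ^*+V(x)-\underline V$, and this bound is \emph{uniform} in the chosen kernels; therefore taking the supremum over kernels preserves it and $J_n(g^*,x)\le nJ^*+V(x)-\underline V$. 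Dividing by $n$ and passing to the $\limsup$ in \eqref{payofff} gives $J(g^*,x)\le J^*$.

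The step I expect to require the most care is the adversarial-maximization direction (i): one must make sure that the supremum in \eqref{payoff.int1} is genuinely over collections of one-step conditional distributions $\{Q(\cdot|x_k,u_k)\}$, so that substituting the stationary water-filling kernel $Q^*$ is admissible and the Markov property needed for the telescoping identity $\mathbb{E}_x^g\{V(x_{k+1})\mid x^k,u^k\}=\sum_z Q^*(z|x_k,u_k)V(z)$ holds; the upper bound (ii) is easier, since there the inequality $f+\sum QV\le J^*+V$ holds kernel by kernel and no stationarity of the worst case needs to be invoked. The remaining ingredients are routine: boundedness of $V$ (from $|{\cal X}|<\infty$), attainment of the inner maximum (from Section~\ref{sec.minimaxstochastic.control}), and Assumptions~\ref{assupt.1} and~\ref{assump.cost}, which are what guarantee that $(V,J^*)$, the per-stage costs and all relevant suprema are finite and well defined.
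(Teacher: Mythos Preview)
Your proposal is correct and follows essentially the same verification-by-telescoping route as the paper: lower bound $J(g,x)\ge J^*$ for all $g$ by evaluating under the water-filling kernel $Q^*$ and telescoping, then conclude optimality of $g^*$. The one noteworthy difference is that you spell out the upper bound $(ii)$ explicitly---using that $f(x,g^*(x))+\sum_z Q(z|x,g^*(x))V(z)\le J^*+V(x)$ for \emph{every} admissible $Q$, then taking the supremum over $Q$ after telescoping---whereas the paper compresses this into the sentence ``when $g$ is replaced by $g^*$ equality holds throughout,'' relying on the reader to see that the inequality chain closes up at $g^*$. Your version is a bit more transparent on this point; otherwise the two arguments coincide.
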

\begin{proof}
 Let $g\in G$ be any policy and $u\in {\cal U}(x)$. Since $(V,J^*)$ satisfies the dynamic programming equation \eqref{dpequation.average.payoff}, which is equivalent to \eqref{equiv2.dpequation.average.payoff}, and by the definition of $g^*$ then
 \begin{eqnarray}\label{dyn.prog.eq.x439i}
 &&f(x,u)+\sum_{z\in {\cal X}}Q^o(z|x,u)V(z)+\frac{R}{2}\Big(\max_{z\in {\cal X}}{V}(z)-\min_{z\in {\cal X}}{V}(z)\Big)\\
 &=&\max_{Q(\cdot|x,u)\in \mathbf{B}_R(Q^o)(x,u)}\Big\{f(x,u)+\sum_{z\in {\cal X}}Q(z|x,u)V(z)\Big\}\nonumber\\
 &\geq &\max_{Q(\cdot|x,g^*(x))\in \mathbf{B}_R(Q^o)(x,g^*(x))}\Big\{f(x,g^*(x))\hspace{.1cm} + \sum_{z\in { \cal X}}Q(z|x,g^*(x))V(z)\Big\} \nonumber\\
 &=&J^*+V(x).\nonumber
 \end{eqnarray}
 Denoting the maximization with respect to $Q(\cdot|x,u)$ in \eqref{dyn.prog.eq.x439i} by $Q^*(\cdot|x,u)$ and the corresponding expectation by $\mathbb{E}^{g,Q^*}$, and taking expectation on both sides of \eqref{dyn.prog.eq.x439i}, we have
\begin{eqnarray}
\mathbb{E}^{g,Q^*}\Big(f(x_j,u_j)\Big)
&\geq&  J^*+\mathbb{E}^{g,Q^*}\Big(V(x_j)\Big)-\mathbb{E}^{g,Q^*}\Big(\sum_{z\in {\cal X}}Q^*(z|x_{j},u_{j})V(z)\Big)\nonumber\\ [-1.5ex]\label{dyn.prog.eq.439}\\[-1.5ex]
 &=&J^*+\mathbb{E}^{g,Q^*}\Big(V(x_j)\Big)-\mathbb{E}^{g,Q^*}\Big(V(x_{j+1})\Big).\nonumber
\end{eqnarray}
Then, from \eqref{payofff} we have that for all $g\in G$, 
\begin{align*}
J(\pi)&\geq \liminf_{j\rightarrow\infty}\Big(\frac{1}{j}\sum_{k=0}^{j-1}\mathbb{E}^{g,Q^*}\big(f(x_k,u_k)\big)\Big)\\
&\overset{(a)}\geq \liminf_{j\rightarrow\infty}\Big(J^*+\frac{1}{j}\Big(\mathbb{E}^{g,Q^*}\big(V(x_0)\big)-\mathbb{E}^{g,Q^*}\big(V(x_j)\big)\Big)\Big)\\
&\overset{(b)}=J^*
\end{align*}
where 

(a) is obtained by using \eqref{dyn.prog.eq.439}; 

(b) is obtained because the last term vanishes as $j\rightarrow\infty$. 

\noindent Thus, $J^*\leq \inf_{g\in G}J(g,x)$. However, when $g$ is replaced by $g^*$ equality holds throughout, and as a result $g^*$ is optimal, that is, $J^*=J^*(x)=\inf_{g\in G}J(g,x)$, $g^*\in G$ is an average cost optimal policy and $J^*$ is the value.
\end{proof}

%
%
%
%
\subsection{Existence}\label{sec.minimaxDP}
Dynamic programming equation \eqref{dpequation.average.payoff} and hence Theorem \ref{DP.inf.ave.lemmac}, are valid under Assumption \ref{assupt.1}. Here, we characterize the solution of the infinite horizon minimax average cost MCM,  under the standard irreducibility condition,  on the nominal transition probabilities of the controlled process. First, we introduce some notation.

Identify the state space ${\cal X}$ by ${\cal X}=\{x_1,x_2,\dots,x_{|{\cal X}|}\}$ consisting of ${|{\cal X}|}$ elements. Then, any function $V: {\cal X}\longmapsto \mathbb{R}$ may be represented by a vector in $\mathbb{R}^{|{\cal X}|}$, as follows.
\begin{align*}
  V=\left(\begin{array}{ccc}
               V(x_1) &\cdots &V(x_{|{\cal X}|})
             \end{array}
  \right)^T\in \mathbb{R}^{|{\cal X}|}.
\end{align*}
Any stationary control policy $g\in G_{SM}$, $g:{\cal X}\longmapsto \mathbb{R}$, may also be identified with a $g\in \mathbb{R}^{|{\cal X}|}$. For any $g$, let $Q(g)\in\mathbb{R}^{{|{\cal X}|}\times {|{\cal X}|}}$ defined by $Q(g)_{ij}=P(x_{t+1}=x_i|x_t=x_j,u_t=g(x_j))$ and
 \begin{align*}
  f(g)=\left(\begin{array}{ccc}
               f(x_1,g(x_1)) &\cdots &f(x_{|{\cal X}|},g(x_{|{\cal X}|}))
             \end{array}
  \right)^T\in \mathbb{R}^{|{\cal X}|}.
\end{align*}
Let $q_0\in \mathbb{R}^{|{\cal X}|}$ be defined by 
$q_0(x_i)\tri P(\{x_0=x_i\})$, $i=1,\dots,{|{\cal X}|}$ and $e\tri(1,\cdots,1)^T\in \mathbb{R}^{|{\cal X}|}$.

The maximization of the expected $n$-stage cost, for a fixed $q_0(x)\in \mathbb{R}^{|{\cal X}|}$, is given by\footnote{The notation $J_n(g,q_0)$ means that $q_0(x)$ is fixed instead of $x_0=x$.}
\begin{eqnarray}\label{finite.average.cost1}
J_n(g,q_0)\triangleq J_n(g,x)q_0^T(x)&=&\max_{  {{ Q}(\cdot| x, u)  \in \mathbf{B}_{R}(Q^o)(x,u)}}{\mathbb E}^g\Big\{\sum_{k=0}^{n-1}f(x_k,u_k)\Big\}\nonumber\\ 
&=&\max_{ {{ Q}(\cdot| x, u)  \in \mathbf{B}_{R}(Q^o)(x,u)}}\Big\{\sum_{k=0}^{n-1}q_0^TQ(g)^kf(g)\Big\}\\
&=&\max_{ {{ Q}(\cdot| x, u)  \in \mathbf{B}_{R}(Q^o)(x,u)}}q_0^T\Big\{\sum_{k=0}^{n-1}Q(g)^k\Big\}f(g).\nonumber
\end{eqnarray}
With $Q^*(\cdot|x,u)$ denoting the maximizing conditional distribution, then \eqref{finite.average.cost1} is equivalent
\begin{equation*}
\max_{  {{ Q}(\cdot| x, u)  \in \mathbf{B}_{R}(Q^o)(x,u)}}q_0^T\Big\{\sum_{k=0}^{n-1}Q(g)^k\Big\}f(g)
=q_0^T\Big\{\sum_{k=0}^{n-1}Q^*(g)^k\Big\}f(g).
\end{equation*}
Hence, the maximizing average cost per unit-time is given by 
\begin{eqnarray*}
J(g,q_0)
&=&\limsup_{n\rightarrow\infty}\max_{  {{ Q}(\cdot| x, u)  \in \mathbf{B}_{R}(Q^o)(x,u)}}\frac{1}{n}{\mathbb E}^g\Big\{\sum_{k=0}^{n-1}f(x_k,u_k)\Big\}\\
&=&\limsup_{n\rightarrow\infty}\frac{1}{n}q_0^T\Big\{\sum_{k=0}^{n-1}Q^*(g)^k\Big\}f(g).
\end{eqnarray*}
Since $q_0\in \mathbb{R}^{|{\cal X}|}$ and $f(g)\in \mathbb{R}^{|{\cal X}|}$ are independent of $n$, we only need to investigate the conditions under which the following limit exists
\begin{equation*}
\lim_{n\rightarrow\infty}\frac{1}{n}\sum_{k=0}^{n-1}Q^*(g)^k.
\end{equation*}
The next Lemma follows directly from \cite[Lemma 5.4]{varayia86}.
\begin{lemma}\label{Ces.lemma1}
If $Q^*\in \mathbb{R}^{|{\cal X}|\times |{\cal X}|}_+$ is a stochastic matrix, then the Cesaro limit
\begin{equation}
\lim_{n\rightarrow\infty}\frac{1}{n}\sum_{k=0}^{n-1}(Q^*)^k=Q^*_1
\end{equation} always exist. The matrix $Q^*_1\in\mathbb{R}_+^{|{\cal X}|\times |{\cal X}|}$ is a stochastic matrix and it is the solution of the equation
\begin{equation}\label{Cesaro.eq.b}
Q^*_1Q^*=Q^*_1.
\end{equation}
\end{lemma}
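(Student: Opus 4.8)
The plan is to treat this as an elementary finite-dimensional mean-ergodic statement, using only the structural fact that every power $(Q^*)^k$ is again a stochastic matrix, so all its entries lie in $[0,1]$; in particular the family $\{(Q^*)^k: k\geq 0\}$ is bounded and $\frac{1}{n}(Q^*)^n\rightarrow 0$ entrywise. Write $S_n\triangleq\frac{1}{n}\sum_{k=0}^{n-1}(Q^*)^k$ for the Cesaro averages; the task is to show $S_n$ converges and to identify the limit with the claimed properties.

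First I would establish the algebraic splitting $\mathbb{R}^{|{\cal X}|}=\ker(I-Q^*)\oplus\operatorname{range}(I-Q^*)$, with both subspaces $Q^*$-invariant. Invariance is immediate because $Q^*$ commutes with $I-Q^*$. For directness, suppose $v$ lies in both summands: then $Q^*v=v$, hence $(Q^*)^kv=v$ and $S_nv=v$ for every $n$; but also $v=(I-Q^*)w$ for some $w$, and the telescoping identity $S_n(I-Q^*)=\frac{1}{n}\big(I-(Q^*)^n\big)$ gives $S_nv=\frac{1}{n}\big(w-(Q^*)^n w\big)\rightarrow 0$ by boundedness of the powers. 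Thus $v=0$, and rank--nullity applied to $I-Q^*$ upgrades the trivial intersection to the asserted direct sum. Everything stays over $\mathbb{R}$ since $\ker$ and $\operatorname{range}$ of the real matrix $I-Q^*$ are real subspaces.

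Next I would read off the limit. On $\ker(I-Q^*)$ every $(Q^*)^k$, hence $S_n$, acts as the identity; on $\operatorname{range}(I-Q^*)$, writing a generic vector as $(I-Q^*)w$, the same telescoping identity gives $S_n(I-Q^*)w=\frac{1}{n}\big(w-(Q^*)^n w\big)\rightarrow 0$. Since the two subspaces span $\mathbb{R}^{|{\cal X}|}$, this proves $S_n\rightarrow Q^*_1$, where $Q^*_1$ is the projection onto $\ker(I-Q^*)$ along $\operatorname{range}(I-Q^*)$; in particular the Cesaro limit exists. That $Q^*_1$ is stochastic is inherited from the $S_n$: each $S_n$ is a convex combination of stochastic matrices, so it is entrywise nonnegative and satisfies $S_n e=e$ (because $Q^*e=e$), and both properties pass to the entrywise limit $Q^*_1$. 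Finally, $Q^*_1Q^*=\lim_n S_nQ^*=\lim_n\frac{1}{n}\sum_{k=1}^{n}(Q^*)^k=\lim_n\big(S_n+\frac{1}{n}((Q^*)^n-I)\big)=Q^*_1$, using once more that $\frac{1}{n}(Q^*)^n\rightarrow 0$, which is exactly equation \eqref{Cesaro.eq.b}.

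The argument is routine --- it is essentially \cite[Lemma 5.4]{varayia86} --- so the only genuine decision is how to obtain the splitting: I would deliberately avoid passing to a Jordan or spectral normal form, which would additionally require arguing that the unit-modulus eigenvalues of $Q^*$ are semisimple (itself again a consequence of the uniform boundedness of $\{(Q^*)^k\}$), and instead derive the decomposition directly from rank--nullity together with the Cesaro-cancellation identity $S_n(I-Q^*)=\frac{1}{n}(I-(Q^*)^n)$ used throughout. That identity, and the boundedness of the powers of a stochastic matrix, are really the only two ingredients; everything else is bookkeeping.
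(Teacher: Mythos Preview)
Your argument is correct. The telescoping identity $S_n(I-Q^*)=\frac{1}{n}\big(I-(Q^*)^n\big)$ together with the uniform boundedness of the powers of a stochastic matrix gives both the trivial intersection $\ker(I-Q^*)\cap\operatorname{range}(I-Q^*)=\{0\}$ and the behavior of $S_n$ on $\operatorname{range}(I-Q^*)$; rank--nullity then yields the full splitting, and the rest is indeed bookkeeping. The stochasticity of $Q^*_1$ and the relation $Q^*_1Q^*=Q^*_1$ are deduced cleanly.

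As for the comparison with the paper: the paper does not supply a proof at all --- it simply invokes \cite[Lemma~5.4]{varayia86}, exactly the reference you yourself mention. So your write-up actually furnishes the details the paper omits. Your deliberate choice to avoid the Jordan/spectral route is a good one here, since it keeps the argument real and uses only the two structural facts (boundedness of powers, the Cesaro telescoping identity) that are genuinely needed.
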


In view of Lemma \ref{Ces.lemma1}, the maximization of the average cost per unit-time of a stationary Markov control policy is given by
\begin{equation}\label{cost.initial}
J(g,q_0)=q_0^TQ^*_1(g)f(g)
\end{equation}
where $Q_1^*(g)$ and $Q^*(g)$ are related by \eqref{Cesaro.eq.b}. We recall the following definition of reducible stochastic matrix from \cite[page 44]{varayia86}.

\begin{definition}
A stochastic matrix $P\in \mathbb{R}_+^{|{\cal X}|\times |{\cal X}|}$ is said to be reducible if by row and column permutations it can be placed into block upper-triangular form
\begin{equation*}
P=\left(
                    \begin{array}{cc}
                      P_1 & P_2  \\
                      0 & P_3\\
                    \end{array}
                  \right), \quad \mbox{where $P_1$, $P_2$ are square matrices.}
                  \end{equation*}
                  A stochastic matrix which is not reducible is said to be irreducible.
\end{definition}

Next, we recall the following Lemma from \cite[Lemma 5.7]{varayia86}.
\begin{lemma}\label{lemma.irreducability}
Let $Q^*\in\mathbb{R}_+^{|{\cal X}|\times |{\cal X}|}$ be an irreducible stochastic matrix. Then, there exists a unique vector $q$ such that
\begin{equation*}
Q^*q=q,\quad e^T q=1, \quad q(x_i)>0 \ \ \mbox{for all} \ \  {x_i\in \cal X}.
\end{equation*} Moreover, the matrix $Q_1^*$ associated with $Q^*$ in \eqref{Cesaro.eq.b} has all rows equal to $q$.
\end{lemma}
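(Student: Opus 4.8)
The plan is to obtain Lemma~\ref{lemma.irreducability} as the classical Perron--Frobenius statement for irreducible stochastic matrices, extracting it directly from the Cesaro-limit matrix $Q_1^*$ supplied by Lemma~\ref{Ces.lemma1}. Before invoking irreducibility I would record what holds for \emph{any} stochastic $Q^*$: since $Q_1^*=\lim_{n\to\infty}\frac1n\sum_{k=0}^{n-1}(Q^*)^k$, multiplying the Cesaro partial sums on either side by $Q^*$ and using $\frac1n\big((Q^*)^n-I\big)\to 0$ gives $Q^*Q_1^*=Q_1^*Q^*=Q_1^*$ and $(Q_1^*)^2=Q_1^*$; moreover $e^TQ_1^*=e^T$, so $Q_1^*$ is again stochastic, and $Q_1^*q=q$ whenever $Q^*q=q$. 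Existence of at least one fixed vector is immediate, because $e^TQ^*=e^T$ shows $1$ is an eigenvalue of $(Q^*)^T$, hence of $Q^*$, so $Q^*q=q$ has a nonzero real solution.

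The heart of the proof --- and the only place irreducibility enters --- is to upgrade this to a \emph{strictly positive} fixed vector that is \emph{unique up to scaling}. For positivity I would argue on $Q_1^*$ rather than on $Q^*$: given indices $i,j$, the $j$-th column of the stochastic matrix $Q_1^*$ is nonzero, so $(Q_1^*)_{\ell j}>0$ for some $\ell$; irreducibility gives a $k$ with $\big((Q^*)^k\big)_{i\ell}>0$; and then $(Q_1^*)_{ij}=\big((Q^*)^kQ_1^*\big)_{ij}\ge \big((Q^*)^k\big)_{i\ell}(Q_1^*)_{\ell j}>0$. Hence $Q_1^*$ is entrywise strictly positive, and each of its columns is a strictly positive probability vector $q$ fixed by $Q^*$. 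For uniqueness, take any real $v$ with $Q^*v=v$ (so $Q_1^*v=v$), set $c\triangleq\min_i v(x_i)/q(x_i)$, attained at some $i_0$; then $v-cq\ge 0$ with a zero in coordinate $i_0$, so $0=(v-cq)(x_{i_0})=\sum_j (Q_1^*)_{i_0 j}\big(v(x_j)-cq(x_j)\big)$ is a sum of nonnegative terms with strictly positive coefficients, which forces $v=cq$. Thus the fixed subspace of $Q^*$ is one-dimensional, and normalizing by $e^Tq=1$ singles out the unique fixed probability vector, necessarily with $q(x_i)>0$ for all $i$.

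It then remains only to identify $Q_1^*$: each of its columns has just been shown to be a fixed probability vector of $Q^*$, hence equals $q$, i.e.\ $Q_1^*=qe^T$, so every column (equivalently, up to the transpose convention for stochastic matrices, every row) of $Q_1^*$ coincides with $q$, which is the asserted form. The step I expect to be the real obstacle is the positivity/uniqueness argument of the second paragraph, which is precisely the content of Perron--Frobenius theory; the device that keeps it elementary is to transport the combinatorial reachability hypothesis onto $Q_1^*$ through $(Q^*)^kQ_1^*=Q_1^*$, so that positivity of a single entry propagates without any aperiodicity bookkeeping, and to run a min-ratio comparison against the now-available strictly positive $q$ to pin down the eigenspace.
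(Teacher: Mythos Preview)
Your argument is correct. The paper does not supply its own proof of this lemma; it is stated as a recollection of \cite[Lemma~5.7]{varayia86} and left without justification. What you have written is a clean, self-contained derivation of the Perron--Frobenius conclusion for irreducible (column-)stochastic matrices, obtained directly from the Cesaro limit of Lemma~\ref{Ces.lemma1}: the key moves --- using $(Q^*)^kQ_1^*=Q_1^*$ to transfer irreducibility into strict positivity of $Q_1^*$, and the min-ratio comparison against a strictly positive fixed vector to force one-dimensionality of the eigenspace --- are all valid. Your parenthetical about the row/column convention is apt: the paper's own notation is not fully consistent (compare the definition $Q(g)_{ij}=P(x_{t+1}=x_i\,|\,x_t=x_j)$ with the row-stochastic usage in \eqref{prel.perf.criter.dp} and the phrase ``all rows equal to $q$''), so your $Q_1^*=qe^T$ is the correct conclusion under the column-stochastic reading, and ``all rows equal to $q$'' under the transposed one.
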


Note that, \eqref{cost.initial} depends on the probability distribution $q_0$ of the initial state. However, if $Q^*_1$ is assumed to be an irreducible stochastic matrix, by Lemma \ref{lemma.irreducability} 
\begin{equation}\label{cost.independ.initial}
J(g,q_0)=q_0^TQ_1^*(g)f(g)=q(g)^Tf(g)\equiv J(g)
\end{equation}
where $q(g)$ is the unique invariant probability distribution, that is, $Q^*(g)q(g)=q(g)$, and the average cost per unit-time $J(g,q_0)\equiv J(g)$ is independent of the initial distribution. Hence, for the remainder of this section, we will assume that for every stationary Markov control policy $g\in G_{SM}$, the stochastic matrix $Q^*(g)$ is irreducible. The next proposition summarizes the above results.

\begin{proposition}\label{prop1.dp.inf.hor} \cite{vanSchuppen10}
Let $g\in G_{SM}$ be a stationary Markov control policy, $g:\cal X\longmapsto {\cal U}$ and assume that $Q^*(g)\in\mathbb{R}_+^{|{\cal X}|\times |{\cal X}|}$ is irreducible. 

\noindent Then the following hold.
\ben
\item[(a)] There exists a unique $q(g)\in \mathbb{R}^{|{\cal X}|}_+$ such that
\begin{equation}
Q^*(g)q(g)=q(g), \quad e^Tq=1.
\end{equation}
\item[(b)] The average cost per unit-time associated with the control policy $g\in G_{SM}$ is \begin{equation}
J(g)=q(g)^Tf(g).
\end{equation}
\item[(c)] There exists a $V(g)\in\mathbb{R}^{|{\cal X}|}$ such that
\begin{equation}
J(g)e+V(g)=f(g)+Q^*(g)V(g).
\end{equation}
\een
\end{proposition}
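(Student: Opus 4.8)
The plan is to dispatch parts (a) and (b) as short consequences of the lemmas already at hand, and to put the actual work into part (c), the average‑cost (Poisson) equation, which I would prove by exhibiting $V(g)$ explicitly via the fundamental matrix of the chain.

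For (a), $Q^*(g)$ is by hypothesis an irreducible stochastic matrix, so Lemma~\ref{lemma.irreducability} applies directly and yields the unique probability vector $q(g)$ it describes. For (b), Lemma~\ref{Ces.lemma1} supplies the Cesaro limit $Q_1^*(g)=\lim_{n}\frac1n\sum_{k=0}^{n-1}Q^*(g)^k$, a stochastic matrix satisfying $Q_1^*(g)Q^*(g)=Q_1^*(g)$, and Lemma~\ref{lemma.irreducability} identifies it as the matrix all of whose rows equal $q(g)$. Feeding this into \eqref{cost.initial} gives $J(g,q_0)=q_0^TQ_1^*(g)f(g)=q(g)^Tf(g)$ for every initial distribution $q_0$ --- this is exactly \eqref{cost.independ.initial} --- so $J(g)=q(g)^Tf(g)$.

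For (c), I would set $h\triangleq f(g)-J(g)e$. First I would record the projector identities $Q_1^*(g)Q^*(g)=Q^*(g)Q_1^*(g)=\big(Q_1^*(g)\big)^2=Q_1^*(g)$, all of which follow from Lemma~\ref{Ces.lemma1} by the elementary observation that a Cesaro average of powers of $Q^*(g)$ is unchanged upon multiplication by $Q^*(g)$ from either side; together with $Q_1^*(g)e=e$ and $Q_1^*(g)f(g)=J(g)e$ (the latter is the $q_0$-independence already used in (b), read off at unit vectors $q_0$), these give $Q_1^*(g)h=0$. Next I would show that $M\triangleq I-Q^*(g)+Q_1^*(g)$ is invertible: if $Mv=0$ then premultiplying by $Q_1^*(g)$ and using the projector identities gives $Q_1^*(g)v=0$, so that $Mv=0$ collapses to $v=Q^*(g)v$, which in turn forces $Q_1^*(g)v=v$ (a Cesaro average of powers of $Q^*(g)$ acts as the identity on vectors fixed by $Q^*(g)$), hence $v=0$. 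Finally I would set $V(g)\triangleq M^{-1}h$; from $Q_1^*(g)M=Q_1^*(g)$ one gets $Q_1^*(g)M^{-1}=Q_1^*(g)$ and therefore $(I-Q^*(g))M^{-1}=MM^{-1}-Q_1^*(g)M^{-1}=I-Q_1^*(g)$, so that $(I-Q^*(g))V(g)=(I-Q_1^*(g))h=h$, which is nothing but $J(g)e+V(g)=f(g)+Q^*(g)V(g)$. Note that $V(g)$ is pinned down only up to an additive multiple of $e$, which is why the statement claims only existence.

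I do not expect a genuine obstacle here --- the result is classical and its content is carried almost entirely by Lemmas~\ref{Ces.lemma1} and~\ref{lemma.irreducability} --- but the one point worth flagging is that $V(g)$ should be produced through $M^{-1}$ rather than as the series $\sum_{k\ge0}\big(Q^*(g)^k-Q_1^*(g)\big)h$: the latter converges when $Q^*(g)$ is aperiodic but not in general, whereas the fundamental-matrix route is insensitive to periodicity of the maximizing kernel $Q^*(g)$.
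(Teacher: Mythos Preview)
Your proposal is correct. For parts (a) and (b) you do exactly what the paper does --- invoke Lemma~\ref{lemma.irreducability} and Lemma~\ref{Ces.lemma1} together with the displayed computation \eqref{cost.initial}--\eqref{cost.independ.initial} --- so there is no difference there. For part (c) the paper gives no argument at all: it simply refers the reader to \cite{vanSchuppen10}. Your explicit construction via the fundamental matrix $M=I-Q^*(g)+Q_1^*(g)$ is the standard self-contained route to the Poisson equation for a finite irreducible chain, and the projector identities and invertibility argument you sketch are all correct; in particular your warning about avoiding the series $\sum_{k\ge0}\big(Q^*(g)^k-Q_1^*(g)\big)h$ is apt, since nothing in the hypotheses rules out periodicity of $Q^*(g)$. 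In short, your proof is strictly more informative than the paper's own, which outsources (c) to a reference.
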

\begin{proof}
Part (a) and (b) follows from Lemma \eqref{lemma.irreducability} and the discussion above it. For part (c) see \cite{vanSchuppen10}.
\end{proof}

\begin{lemma}\label{lemma1.dp.inf.hor}
Assume the following hold.
\ben
\item For any stationary control policy $g\in G_{SM}$, $Q^*(g)\in\mathbb{R}_+^{|{\cal X}|\times |{\cal X}|}$ is irreducible.
\item There exists a $g^*\in G_{SM}$ such that \begin{equation*}
J^*=\inf_{g\in G_{SM}}J(g).
\end{equation*}
\een
Then there exists an $(V(g^*,\cdot),J^*)$, $V(g^*,\cdot):{\cal X}\longmapsto \mathbb{R}$ and $J^*\in \mathbb{R}$ which is a solution to the dynamic programming equation 
\begin{equation*}
J^*+V(g^*,x)=\min_{u\in{\cal U}}\Big\{f(x,u)+\sum_{z\in{\cal X}}Q^*(z|x,u)V(g^*,z)\Big\}.
\end{equation*}
\end{lemma}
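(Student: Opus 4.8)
The plan is to establish the two inequalities that together give the dynamic programming equation, written (via \eqref{f2n} and the equivalence of \eqref{dpequation.average.payoff} and \eqref{equiv2.dpequation.average.payoff}) in the minimax form $J^{*}+V(g^{*},x)=\min_{u\in{\cal U}(x)}\max_{Q(\cdot|x,u)\in\mathbf{B}_{R}(Q^{o})(x,u)}\big\{f(x,u)+\sum_{z\in{\cal X}}Q(z|x,u)V(g^{*},z)\big\}$, so that the stated $Q^{*}(\cdot|x,u)$ is the water-filling maximizer of Section \ref{sec.minimaxstochastic.control}. For the ``$\geq$'' direction I would apply Proposition \ref{prop1.dp.inf.hor}(c) to $g^{*}$: since $g^{*}\in G_{SM}$, the matrix $Q^{*}(g^{*})$ is irreducible by the first hypothesis, and the proposition yields a relative value function $V(g^{*},\cdot)$ with $J^{*}e+V(g^{*})=f(g^{*})+Q^{*}(g^{*})V(g^{*})$, using $J(g^{*})=J^{*}$ from the second hypothesis. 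Because the $x$-th row of $Q^{*}(g^{*})$ is exactly the maximizer of $\sum_{z}Q(z|x,g^{*}(x))V(g^{*},z)$ over $\mathbf{B}_{R}(Q^{o})(x,g^{*}(x))$, this identity reads $J^{*}+V(g^{*},x)=f(x,g^{*}(x))+\max_{Q(\cdot|x,g^{*}(x))}\sum_{z}Q(z|x,g^{*}(x))V(g^{*},z)$, which is $\geq$ the minimum of that same maximized expression over $u\in{\cal U}(x)$, i.e.\ the right-hand side of the equation.

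For the reverse inequality I would argue by contradiction, via one-step policy improvement. Suppose that at some $x_{0}\in{\cal X}$ there is $u_{0}\in{\cal U}(x_{0})$ with $\epsilon:=J^{*}+V(g^{*},x_{0})-\big(f(x_{0},u_{0})+\max_{Q(\cdot|x_{0},u_{0})}\sum_{z}Q(z|x_{0},u_{0})V(g^{*},z)\big)>0$. Let $\hat{g}\in G_{SM}$ coincide with $g^{*}$ off $x_{0}$ with $\hat{g}(x_{0})=u_{0}$, and let $\widetilde{Q}$ be the stochastic matrix whose $x$-th row is the water-filling maximizer of $\sum_{z}Q(z|x,\hat{g}(x))V(g^{*},z)$ over $\mathbf{B}_{R}(Q^{o})(x,\hat{g}(x))$. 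Combining the identity from the previous paragraph (valid at every $x\neq x_{0}$, where $\hat{g}(x)=g^{*}(x)$) with the definition of $\epsilon$ at $x_{0}$ gives, componentwise, $J^{*}e+V(g^{*})=f(\hat{g})+\widetilde{Q}\,V(g^{*})+\epsilon\,\mathbf{1}_{x_{0}}$, where $\mathbf{1}_{x_{0}}$ is the $x_{0}$-th unit vector. Since $Q^{*}(\hat{g})$ is irreducible (first hypothesis), Proposition \ref{prop1.dp.inf.hor}(c) applied to $\hat{g}$ gives $V(\hat{g},\cdot)$ with $J(\hat{g})e+V(\hat{g})=f(\hat{g})+Q^{*}(\hat{g})V(\hat{g})$. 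Subtracting, and inserting $\pm\,Q^{*}(\hat{g})V(g^{*})$, I obtain $(J^{*}-J(\hat{g}))e+W=\big(\widetilde{Q}-Q^{*}(\hat{g})\big)V(g^{*})+Q^{*}(\hat{g})W+\epsilon\,\mathbf{1}_{x_{0}}$ with $W:=V(g^{*})-V(\hat{g})$. The key observation is $\big(\widetilde{Q}-Q^{*}(\hat{g})\big)V(g^{*})\geq 0$ componentwise, because the $x$-th row of $\widetilde{Q}$ maximizes $\sum_{z}Q(z|x,\hat{g}(x))V(g^{*},z)$ over the very ball $\mathbf{B}_{R}(Q^{o})(x,\hat{g}(x))$ to which the $x$-th row of $Q^{*}(\hat{g})$ also belongs. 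Hence $(J^{*}-J(\hat{g}))e+W\geq Q^{*}(\hat{g})W+\epsilon\,\mathbf{1}_{x_{0}}$; averaging this against the stationary distribution $q(\hat{g})$ of $Q^{*}(\hat{g})$ cancels the $W$-terms, and since $q(\hat{g})$ is strictly positive by Lemma \ref{lemma.irreducability}, it yields $J^{*}-J(\hat{g})\geq\epsilon\,q(\hat{g})(x_{0})>0$. This contradicts $\hat{g}\in G_{SM}$ together with $J^{*}=\inf_{g\in G_{SM}}J(g)$. Therefore no such $(x_{0},u_{0})$ exists, so $\min_{u\in{\cal U}(x)}\{\cdots\}\geq J^{*}+V(g^{*},x)$, and with ``$\geq$'' the equation follows with $J^{*}$ the minimum average cost.

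The step I expect to be the main obstacle is the ``$\leq$'' direction, specifically the bookkeeping forced by the fact that the adversary's conditional distribution is not a fixed kernel but is re-optimized against whichever value function is in play. A naive improvement argument reusing a single maximizing kernel after switching one control value does not close; the fix is to compare $\hat{g}$ against its \emph{own} relative value function $V(\hat{g},\cdot)$ through its \emph{own} maximizing kernel $Q^{*}(\hat{g})$, and then to exploit that the water-filling kernel $\widetilde{Q}$, being optimal against $V(g^{*},\cdot)$, dominates $Q^{*}(\hat{g})$ against $V(g^{*},\cdot)$, so the cross term has a definite sign. Irreducibility is used twice and is indispensable: it licenses Proposition \ref{prop1.dp.inf.hor}(c) for both $g^{*}$ and $\hat{g}$ (existence of $V(g^{*},\cdot)$ and $V(\hat{g},\cdot)$), and it makes $q(\hat{g})$ strictly positive, which is precisely what converts the one-state strict improvement into a strict decrease of the average cost. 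A routine point to check is existence and measurability of the switched selector $\hat{g}$ and of the kernel $\widetilde{Q}$, but in the finite-alphabet setting of this section both are immediate.
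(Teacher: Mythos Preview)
Your argument is correct and follows the same overall architecture as the paper's: invoke Proposition~\ref{prop1.dp.inf.hor}(c) at $g^{*}$ to get the ``$\geq$'' direction, then derive a contradiction by improving the policy at a point of strict inequality and averaging against the resulting invariant distribution. The paper's version is brisker: it passes to the full $\argmin$ policy $g_{1}$, multiplies the componentwise inequality by $q(g_{1})$, uses the invariance $q(g_{1})^{T}Q^{*}(g_{1})=q(g_{1})^{T}$ to cancel the $V(g^{*},\cdot)$ terms, and reads off $J^{*}>J(g_{1})$ via Proposition~\ref{prop1.dp.inf.hor}(a,b).

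The genuine difference is that you keep track of three distinct kernels --- $Q^{*}(g^{*})$ (water-filling against $V(g^{*},\cdot)$ at $g^{*}$'s actions), $Q^{*}(\hat g)$ (water-filling against $V(\hat g,\cdot)$ at $\hat g$'s actions, which is the kernel Proposition~\ref{prop1.dp.inf.hor} hands you for $\hat g$), and $\widetilde Q$ (water-filling against $V(g^{*},\cdot)$ at $\hat g$'s actions) --- and you close the gap between the last two with the sign inequality $(\widetilde Q-Q^{*}(\hat g))V(g^{*})\geq 0$. The paper's proof implicitly identifies the kernel appearing in the minimized right-hand side (water-filling against $V(g^{*},\cdot)$ at actions $g_{1}$) with the kernel $Q^{*}(g_{1})$ whose invariant measure is $q(g_{1})$; your cross-term argument is precisely what makes that identification legitimate at the level of the averaged inequality. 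So your extra bookkeeping is not decorative --- it is the rigorous form of the step the paper compresses. The one-state modification $\hat g$ versus the full $\argmin$ policy $g_{1}$ is a cosmetic choice; either works once the kernel distinction is handled.
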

\begin{proof}
By Proposition \ref{prop1.dp.inf.hor} (c), there exists a $V(g^*,\cdot):{\cal X}\longmapsto \mathbb{R}$ and $J^*$ such that for all $x\in{\cal X}$
\begin{equation}\label{appendC.DP.equation.a}
J^*+V(g^*,x)=f(x,g^*(x))+\sum_{z\in {\cal X}}Q^*(z|x,g^*(x))V(g^*,z).
\end{equation}
Then, for all $x\in{\cal X}$
\begin{equation*}
J^*+V(g^*,x)\geq \min_{u\in {\cal U}}\Big\{f(x,u)+\sum_{z\in {\cal X}}Q^*(z|x,u)V(g^*,z)\Big\}.
\end{equation*}
Define $g_1:{\cal X}\longmapsto {\cal U}$ as
\begin{equation*}
g_1(x)=\argmin_{u\in{\cal U}}\Big\{f(x,u)+\sum_{z\in{\cal X}}Q^*(z|x,u)V(g^*,z)\Big\}.
\end{equation*}
Suppose that for some $x_2\in{\cal X}$ strict inequality holds in \eqref{appendC.DP.equation.a}, then
\begin{equation}\label{appendC.DP.equation.b}
J^*+V(g^*,x)
> \min_{u\in {\cal U}}\Big\{f(x_2,u)+\sum_{z\in {\cal X}}Q^*(z|x_2,u)V(g^*,z)\Big\}.
\end{equation}
Multiplying \eqref{appendC.DP.equation.b} by $q(g_1)(x_0)>0$ and summing over $x_0\in{\cal X}$ yields
\begin{eqnarray*}
&&J^*+\sum_{x_0\in{\cal X}}q(g_1)(x_0)V(g^*,x_0)\\
&>&\min_{u\in{\cal U}}\Big\{\sum_{x_0\in\cal X}q(g_1)(x_0)f(x_0,u)+\sum_{x_0\in\cal X}q(g_1)(x_0)\sum_{z\in\cal X}Q^*(z|x_0,u)V(g^*,z)\Big\}\\
&=&\sum_{x_0\in\cal X}q(g_1)(x_0)f(x_0,g_1(x_0))+\sum_{x_0\in\cal X}q(g_1)(x_0)\sum_{z\in\cal X}Q^*(z|x_0,g_1(x_0))V(g^*,z)\\
&=&J(g_1)+\sum_{z\in\cal X}q(g_1)V(g^*,z), \quad \mbox{by Proposition \ref{prop1.dp.inf.hor} (a)}
\end{eqnarray*}
which gives $J^*>J(g_1)$, contradicting assumption 2. Hence, equality holds in \eqref{appendC.DP.equation.a}, for every $x\in\cal X$.
\end{proof}

Next, we state the second main Theorem of this section.

\begin{theorem}\label{DP.inf.hor.Theorem.average.lim}
Assume that for all stationary Markov control policies $g\in G_{SM}$, and for a given total variation parameter $R\in[0,R_{\max}]\subset[0,2]$, the maximizing transition matrix $Q^*(g)$ is irreducible. Then the following hold.
\ben
\item[(a)] There exists a solution $(V,J^*)$, $V:\cal X\longmapsto \mathbb{R}$, $J^*\in \mathbb{R}$ to the dynamic programming equation 
\begin{equation}\label{theorem.dp.eq}
J^*+V(x)
=\min_{u\in\cal U}\Big\{f(x,u)+\sum_{z\in\cal X}Q^*(z|x,u)V(z)\Big\}
\end{equation}
or, to the equivalent dynamic programming equation
\begin{equation}\label{theorem.dp.eq.1a}
J^*+V(x)=\min_{u\in\cal U}\Big\{f(x,u)
+\sum_{z\in\cal X}Q^o(z|x,u)V(z)
+\frac{R}{2}\Big(\max_{z\in\cal X}V(z)-\min_{z\in\cal X}V(z)\Big)\Big\}
\end{equation}
where $\max_{z\in\cal X}V(z)$ denotes component-wise maximum and similarly for the minimum. The maximizing conditional distribution $Q^*(\cdot|x,u)$, $(x,u)\in\mathbb{K}$ is given by \eqref{all3}, where $\nu^*(\cdot)$, $\mu(\cdot)$ and $\ell(\cdot)$ are replaced by $Q^*(\cdot|x,u)$, $Q^o(\cdot|x,u)$ and $V(\cdot)$, respectively, i.e.,
\begin{subequations}\label{Q.matrix.limited.a}
\begin{align}
\label{Q.matrix.limited.a1}&Q^*({\cal X}^+|x,u)=Q^o({\cal X}^+|x,u)+\frac{\alpha}{2}\\
\label{Q.matrix.limited.a2}&Q^*({\cal X}^-|x,u)=\Big(Q^o({\cal X}^-|x,u)-\frac{\alpha}{2}\Big)^+\\
\label{Q.matrix.limited.a3}&Q^*({\cal X}_k|x,u)= \Big(Q^o({\cal X}_k|x,u)-\Big(\frac{\alpha}{2}-\sum_{j=1}^kQ^o({\cal X}_{k-1}|x,u)\Big)^+\Big)^+\\
\label{Q.matrix.limited.a4}&\alpha = \min\Big(R,2(1-Q^o({\cal X}^+|x,u))\Big)
\end{align}
\end{subequations}
and
\begin{subequations}\label{Q.matrix.limited.b}
\begin{align}
\label{Q.matrix.limited.b1}&{\cal X}^+\triangleq  \Big\{x\in{\cal X}:V(x)=\max\{V(x):x\in{\cal X}\}\Big\}\\
\label{Q.matrix.limited.b2}&{\cal X}^-\triangleq  \Big\{x\in{\cal X}:V(x)=\min\{V(x):x\in{\cal X}\}\Big\}\\
\label{Q.matrix.limited.b3}&{\cal X}_k \triangleq \Big\{x\in {\cal X}:V(x)=\min\big\{V(\alpha): \alpha \in {\cal X}\setminus {\cal X}^0\cup(\bigcup_{j=1}^k{\cal X}_{j-1})\big\} \Big\}
\end{align}\end{subequations}
where $k=1,2,\dots,r$ (see Section \ref{subsec.finite.extr}).
\item[(b)] If $g^*(x)$ attains the minimum in \eqref{theorem.dp.eq} or equivalently in \eqref{theorem.dp.eq.1a} for every $x$, then $g^*$ is an average cost optimal policy.
\item[(c)] The minimum average cost is $J^*$.
\een
\end{theorem}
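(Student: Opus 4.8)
The plan is to reduce the theorem to the results already assembled in this section, chiefly Lemma \ref{lemma1.dp.inf.hor}, Theorem \ref{DP.inf.ave.lemmac}, and the equivalence \eqref{f2n}. First, for part (a), I would verify that the two displayed dynamic programming equations \eqref{theorem.dp.eq} and \eqref{theorem.dp.eq.1a} are genuinely the same equation: this is immediate from \eqref{f2n} applied pointwise with $\ell(\cdot)=V(\cdot)$ and $\mu(\cdot)=Q^o(\cdot|x,u)$, which converts $\max_{Q\in\mathbf{B}_R(Q^o)}\sum_z Q(z|x,u)V(z)$ into $\sum_z Q^o(z|x,u)V(z)+\frac{R}{2}(\max_z V(z)-\min_z V(z))$, and the maximizing $Q^*(\cdot|x,u)$ is exactly the water-filling measure \eqref{all3}, i.e. \eqref{Q.matrix.limited.a}--\eqref{Q.matrix.limited.b}. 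So it suffices to exhibit a solution $(V,J^*)$ of \eqref{theorem.dp.eq}.

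For existence of that solution, I would first invoke the hypothesis that $Q^*(g)$ is irreducible for every $g\in G_{SM}$ and $R\in[0,R_{\max}]$ together with Proposition \ref{prop1.dp.inf.hor}: for each stationary $g$ there is a unique invariant distribution $q(g)$, the average cost $J(g)=q(g)^Tf(g)$ is well defined and independent of $q_0$ by \eqref{cost.independ.initial}, and there is a relative value vector $V(g)$ solving $J(g)e+V(g)=f(g)+Q^*(g)V(g)$. Since $G_{SM}$ is finite (finite ${\cal X}$ and ${\cal U}$), the infimum $J^*=\inf_{g\in G_{SM}}J(g)$ is attained at some $g^*$, so hypothesis 2 of Lemma \ref{lemma1.dp.inf.hor} holds; hypothesis 1 is exactly our irreducibility assumption. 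Lemma \ref{lemma1.dp.inf.hor} then delivers a pair $(V(g^*,\cdot),J^*)$ solving \eqref{theorem.dp.eq}. Setting $V\triangleq V(g^*,\cdot)$ proves (a), and the characterization of $Q^*$ via \eqref{Q.matrix.limited.a}--\eqref{Q.matrix.limited.b} follows from the water-filling formulas \eqref{all3} with the stated substitutions, since the support sets ${\cal X}^0,{\cal X}_0,{\cal X}_k$ there are precisely ${\cal X}^+,{\cal X}^-,{\cal X}_k$ here.

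For parts (b) and (c), I would argue that the pair $(V,J^*)$ just constructed satisfies the hypotheses of Theorem \ref{DP.inf.ave.lemmac}. The one gap to close is Assumption \ref{assupt.1} (existence of the limit $\overline V_j(x)-jJ^*\to V(x)$), which Theorem \ref{DP.inf.ave.lemmac} formally requires; under irreducibility this is the standard convergence of the finite-horizon relative value iteration to the relative value function, and I would either cite it (e.g. \cite{varayia86,puterman94}) or note that the conclusion of Theorem \ref{DP.inf.ave.lemmac} — that any $(V,J^*)$ solving \eqref{dpequation.average.payoff} with a minimizing stationary selector $g^*$ gives an optimal policy with value $J^*$ — can be obtained directly from the inequality chain in its proof, which only uses that $(V,J^*)$ solves the DP equation (equivalently \eqref{equiv2.dpequation.average.payoff}) and that $g^*$ attains the min. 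Applying that chain with our $(V,J^*)$ yields $J^*\le\inf_{g\in G}J(g,x)$ for all $x$, with equality when $g=g^*$; hence $g^*$ is average-cost optimal and $J^*$ is the minimum average cost, independent of $x$. The main obstacle is the bookkeeping around Assumption \ref{assupt.1}: one must make sure irreducibility of every $Q^*(g)$ for $R\in[0,R_{\max}]$ really does force the relative-value limit to exist (this is where $R_{\max}$, as opposed to all of $[0,2]$, is used), so that Theorem \ref{DP.inf.ave.lemmac} applies without circularity; everything else is assembly of the preceding lemmas.
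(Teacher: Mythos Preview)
Your proposal is correct and follows essentially the same route as the paper: existence of $(V,J^*)$ via Lemma~\ref{lemma1.dp.inf.hor} (whose second hypothesis you verify using finiteness of $G_{SM}$ and Proposition~\ref{prop1.dp.inf.hor}), optimality of $g^*$ and identification of $J^*$ via Theorem~\ref{DP.inf.ave.lemmac}, and the equivalence of \eqref{theorem.dp.eq} and \eqref{theorem.dp.eq.1a} together with the water-filling form of $Q^*$ via the results of Section~\ref{sec.minimaxstochastic.control}. Your observation that the verification argument in the proof of Theorem~\ref{DP.inf.ave.lemmac} does not actually use Assumption~\ref{assupt.1}---it only needs that $(V,J^*)$ satisfies the DP equation and that $g^*$ attains the minimum---is correct and neatly closes the apparent gap you flagged.
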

\begin{proof}
Theorem \ref{DP.inf.hor.Theorem.average.lim} is obtained by combining Theorem \ref{DP.inf.ave.lemmac} and Lemma \ref{lemma1.dp.inf.hor} and by applying the results of Section \ref{sec.minimaxstochastic.control}.
\end{proof}

The main observation is that in specific applications one may employ either dynamic programming equation \eqref{theorem.dp.eq} or \eqref{theorem.dp.eq.1a}.

\subsubsection{Policy Iteration Algorithm}
\label{subsec.inf_ave_policy_iteration_alg}
In this section, we provide a modified version of the classical policy iteration algorithm for average cost dynamic programming \cite{varayia86,vanSchuppen10}. From part (a) of Theorem \ref{DP.inf.hor.Theorem.average.lim}, the policy evaluation and policy improvement steps of a policy iteration algorithm must be performed using the maximizing conditional distribution obtained under total variation distance ambiguity constraint. Moreover, one needs to guarantee that for the given total variation parameter $R$, the corresponding maximizing matrix $Q^*$ is irreducible, otherwise, Algorithm \ref{alg.pol.iter.aver.cost} may not be sufficient to give the optimal policy and the minimum cost. In general, $R\in[0,R_{\max}]\subseteq[0,2]$, and $R_{\max}$ is strictly less than $2$. This generality will be discussed in Section \ref{subsec.inf_ave_gener_DP} for general Borel spaces.\\

\begin{algorithm}
(Policy iteration)
\ben 
\item Let $m=0$ and select an arbitrary stationary Markov control policy $g_0:\cal X\longmapsto {\cal U}$.
\item (Policy Evaluation) Solve the equation 
\begin{equation}\label{pol.eval.V}
J_{Q^o}(g_m)e{+}V_{Q^o}(g_m){=}f(g_m){+}Q^o(g_m)V_{Q^o}(g_m)
\end{equation}
for $J_{Q^o}(g_m)\in\mathbb{R}$ and $V_{Q^o}(g_m)\in\mathbb{R}^{|{\cal X}|}$. Identify the support sets of \eqref{pol.eval.V} using \eqref{Q.matrix.limited.b}, and construct the matrix $Q^*(g_m)$ using \eqref{Q.matrix.limited.a}. Solve the equation
\begin{equation}\label{pol.eval.V.TV}
J_{Q^*}(g_m)e{+}V_{Q^*}(g_m){=}f(g_m){+}Q^*(g_m)V_{Q^*}(g_m)
\end{equation} for $J_{Q^*}(g_m)\in\mathbb{R}$ and $V_{Q^*}(g_m)\in\mathbb{R}^{|{\cal X}|}$.
\item (Policy Improvement) Let \begin{equation}
g_{m+1}=\argmin_{g\in \mathbb{R}^{|{\cal X}|}}\Big\{f(g)+Q^*(g)V_{Q^*}(g_m)\Big\}.
\end{equation}
\item If $g_{m+1}=g_m$, let $g^*=g_m$; else let $m=m+1$ and return to step 2.\\
\een
\label{alg.pol.iter.aver.cost}
\end{algorithm}

In Section \ref{working example3}, we illustrate how policy iteration algorithm for infinite horizon average cost dynamic programming is implemented through an example. 

\subsubsection{Limitations}
\label{subsec.inf_ave_limitation}
Part (a) of Theorem \ref{DP.inf.hor.Theorem.average.lim}, indicates that for a stationary Markov control policy $g\in G_{SM}$, and for an irreducible stochastic matrix $Q^*$ there exists a solution to the dynamic programming equation \eqref{theorem.dp.eq}. Moreover, the maximizing stochastic matrix $Q^*$ which is given by \eqref{Q.matrix.limited.a}, is calculated based on the support sets \eqref{Q.matrix.limited.b}, the nominal stochastic matrix $Q^o$, and the value of the total variation parameter $R\in [0,R_{\max}]$. Hence, in order to apply policy iteration algorithm for average-cost dynamic programming one needs to know in advance that, for a given total variation parameter $R\in[0,2]$, and an irreducible nominal stochastic matrix $Q^o$, the maximizing stochastic matrix $Q^*$ is also irreducible. Otherwise, policy iteration algorithm may not be sufficient to give the optimal policy and the minimum cost. In particular, as we show next, if the irreducibility condition is not satisfied then the policy iteration algorithm need not have a unique solution.

As an example (inspired by \cite{puterman94}), consider the stochastic control system shown in Fig.\ref{fig.exd.11a}, with state-space ${\cal X}=\{1,2,3\}$ and control set ${\cal U}=\{u_1,u_2\}$.
\begin{figure}[!htbp]
        \centering
                \includegraphics[height=5.5cm,width=.55\linewidth]{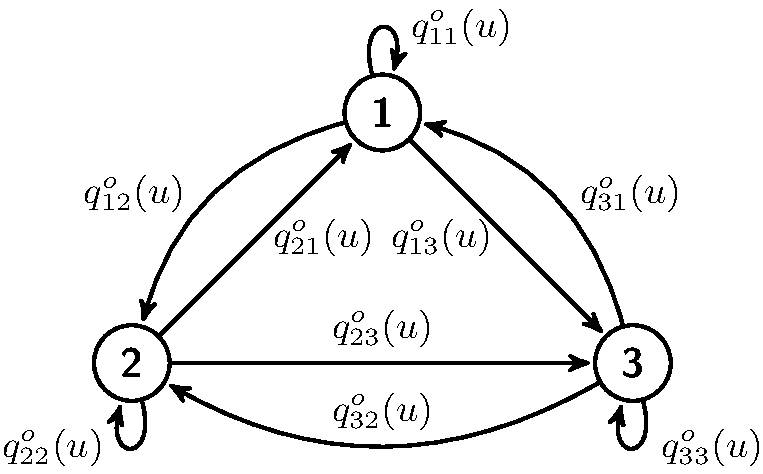}      
        \caption[]{Nominal Stochastic Control System.}\label{fig.exd.11a}
\end{figure}
Let the nominal transition probability under controls $u_1$ and $u_2$ to be given by
\begin{equation}\label{matrices.red}
  Q^o(u_1)=\frac{1}{9}\left(
                    \begin{array}{ccc}
                      0 & 5 & 4 \\
                      0 & 9 & 0 \\
                      0 & 0 & 9 \\
                    \end{array}
                  \right),\quad Q^o(u_2)=\frac{1}{9}\left(
                    \begin{array}{ccc}
                      2 & 7 & 0 \\
                      3 & 6 & 0 \\
                      8 & 0 & 1 \\
                    \end{array}
                  \right).
\end{equation}
The cost function under each state and action is given by
\begin{equation*}
f(1,u_1)=2,\ f(2,u_1)=1,\ f(3,u_1)=3,\ f(1,u_2)=0.5,\ f(2,u_2)=3,\ f(3,u_2)=0.
\end{equation*}
Clearly, from \eqref{matrices.red}, this control system the nominal transition probability matrix, under both controls, is reducible, since the system under controls $u_1$ and $u_2$ contains more than one communication class\footnote{States $i$ and $j$ belong to the same communication class if and only if each of these states can reach and be reached by the other.}. Using policy iteration Algorithm \ref{alg.pol.iter.aver.cost} with initial policies $g_0(1)=g_0(2)=g_0(3)=u_1$, the optimality equation \eqref{pol.eval.V} for this system may be written as 
\begin{equation*}
J_{Q^o}\left(\begin{array}{c}
     1 \\
     1 \\
     1
   \end{array}\right){+}V_{Q^o}(g_0){=}\left(\begin{array}{c}
     2 \\
     1 \\
     3
   \end{array}\right){+}\frac{1}{9}\left(
                              \begin{array}{ccc}
                                0 & 5 & 4 \\
                                0 & 9 & 0 \\
                                0 & 0 & 9 \\
                              \end{array}
                            \right)V_{Q^o}(g_0)
\end{equation*}
and hence
\begin{eqnarray*}
J_{Q^o}+V_{Q^o}(g_0,1)&=&2+\frac{5}{9}V_{Q^o}(g_0,2)+\frac{4}{9}V_{Q^o}(g_0,3)\\
J_{Q^o}+V_{Q^o}(g_0,2)&=&1+V_{Q^o}(g_0,2)\Longrightarrow J_{Q^o}=1\\
J_{Q^o}+V_{Q^o}(g_0,3)&=&3+V_{Q^o}(g_0,3)\Longrightarrow J_{Q^o}=3.
\end{eqnarray*}
The second and third equations show that the system is inconsistent, and hence, the policy iteration algorithm fails to give the optimal policy and the minimum cost.

Moreover, even if $Q^o$ is an irreducible stochastic matrix, as the value of total variation parameter $R$ increases the maximizing stochastic matrix $Q^*(R)$, eventually, will be transformed into a reducible stochastic matrix. Hence, our proposed method for solving minimax stochastic control problem with average cost is valid only for a specific range of values of total variation parameter, in $R\in [0,R_{\max}]\subseteq [0,2]$. In particular, if $Q^o$ is an irreducible stochastic matrix then, for any given partition of the state-space, there exists an $R_{\max}\in[0,2)$ for which we distinguish the following two cases: \ben \item[(a)] for $0\leq R<R_{\max}$, $Q^*$ is an irreducible stochastic matrix. Theorem \ref{DP.inf.hor.Theorem.average.lim} is valid and policy iteration algorithm gives the optimal policy and the minimum cost.
  \item[(b)] for $R\geq R_{\max}$, $Q^*$ is a reducible stochastic matrix. Theorem \ref{DP.inf.hor.Theorem.average.lim} is not valid and policy iteration algorithm need not have a solution.
  \een

\begin{remark}
Consider $R\geq R_{\max}$. Then, an extended solution through a reduced dimensional state-space may be obtained as follows.  Due to the water-filling behavior of maximizing conditional distribution \eqref{Q.matrix.limited.a}, columns of $Q^*$ which correspond to states belonging to ${\cal X}\setminus {\cal X}^0$, become columns with all zero's, as total variation parameter $R$ increases. Whenever an all zero column appears, one can remove the corresponding state of that column, and hence $Q^*$ will be transformed back into an irreducible stochastic matrix of reduced order.
\end{remark}

\section{Minimax Stochastic Control for Borel Spaces}
\label{subsec.inf_ave_gener_DP}
In this section, we derive the general dynamic programming equation for Borel spaces $({\cal X},{\cal U})$ which solves the MDP for all values of $R\in[0,2]$. In addition, we derive a generalized policy iteration algorithm corresponding to the generalized dynamic programming equations  when the state and control spaces are of finite dimension. Note that, throughout this section we again suppose that Assumption \ref{assump.cost} holds.

\subsection{General Dynamic Programming} Throughout this section it is assumed that Assumptions~\ref{assump.cost} hold.
The characterization of optimal policies for the minimax MCP defined by \eqref{markov.dec.problem.amb}, will be based on the concept of a canonical triplet adopted to the current formulation (see \cite{hern1996discrete}).

Consider the MCM \eqref{inf.hor.mcm.def}, where $({\cal X},{\cal U})$ are Borel spaces, and let $h:{\cal X}\longmapsto \mathbb{R}$ be a bounded, continuous and non-negative function. Denote the expected $n$-stage cost, with a terminal cost $h$, policy $g$, and $x_0=x$, by $J_0(g,Q,x,h)=h(x)$,  and for $n\geq 1$, by
\begin{equation}
J_n(g,Q,x,h)=\mathbb{E}^g_x\Big\{\sum_{k=0}^{n-1}f(x_k,u_k)+h(x_n)\Big\}=J_n(g,Q,x)+\mathbb{E}^g_x\Big\{h(x_n)\Big\}
\end{equation} with $J_n(g,Q,x)=J_n(g,Q,x,0)$. The corresponding maximizing expected $n$-stage cost is given by
\begin{eqnarray}\label{gen.finite.cost}
J_n(g,x,h)&=&\sup_{Q(\cdot|x,u)\in \mathbf{B}_R(Q^o)(x,u)}\mathbb{E}^g_x\Big\{\sum_{k=0}^{n-1}f(x_k,u_k)+h(x_n)\Big\}\\
&=&\mathbb{E}^{g,Q^*}_x\Big\{\sum_{k=0}^{n-1}f(x_k,u_k)+h(x_n)\Big\}=J_n(g,x)+\mathbb{E}^{g,Q^*}_x\Big\{h(x_n)\Big\}\nonumber
\end{eqnarray}
with $J_n(g,x)=J_n(g,x,0)$, where $Q^*(\cdot|x,u)$ is the maximizing distribution. Then, 
\begin{equation}\label{gen.infinite.cost}
J_n^*(x,h)=\inf_{g\in G}J_n(g,x,h);\quad J_n^*(x)=\inf_{g\in G}J_n(g,x,h), \quad \mbox{if} \ h(\cdot)=0.
\end{equation}

Throughout this section it is assumed that there exists a policy $g\in G$ and an initial state $x\in {\cal X}$ such that $J(g,x)<\infty$ (i.e., see \eqref{payofff}).
The definition of a canonical triplet is intriduced next, following  \cite{Yushkevich1973,hern1996discrete} with a slight variation, to account for the extra terms, which enter the dynamic programming equation.
\begin{definition}\label{dfn.canonical}
Let $\rho$ and $h$ be real-valued, bounded, continuous, non-negative, measurable functions on $\cal X$ and $\varphi\in \mathbb{F}$ a given selector. Then $(\rho,h,\varphi)$ is said to be a canonical triplet if
\begin{equation}\label{canonical.triplet.eq}
J_n(g^{\infty},x,h)=J_n^*(x,h)=n\rho(x)+h(x),\quad \forall x\in {\cal X},\ n=0,1,\dots.
\end{equation}
A selector $\varphi\in \mathbb{F}$ (of a stationary policy $g^{\infty}\in G_{SM}$) is called canonical if it is an element of some canonical triplet. 
\end{definition}

Note that with the appropriate choice of $h$ as the terminal cost the policy $g^{\infty}$ is optimal for the $n$-stage problem for all $n=0,1,\dots$. The following Theorem characterizes the canonical triplets for the minimax problem, with respect to the new dynamic programming equation.

\begin{theorem}\label{theorme}
Suppose the supremum and infimum of $h(\cdot)$ and  $\rho(\cdot)$ over ${\cal X}$ is non-empty. Then $(\rho,h,\varphi)$ is a canonical triplet if and only if, for every $x\in\cal X$, the following hold.
\begin{alignat}{2}
&\tag{a} &&\hspace{-.6cm} \rho(x)=\inf_{u\in {\cal U}(x)}\Big\{\int_{{\cal X}}\rho(z)Q^o(dz|x,u)+\frac{R}{2}\Big(\sup_{z\in{\cal X}}\rho(z)-\inf_{z\in{\cal X}}\rho(z)\Big)\Big\}  \label{can.tripl.a}\\
&\tag{b} &&\hspace{-.6cm} \rho(x)+h(x)=\inf_{u\in {\cal U}(x)}\Big\{f(x,u){+}\int_{ {\cal X}} h(z)Q^o(dz|x,u)+\frac{R}{2}\Big(\sup_{z\in{\cal X}}h(z)-\inf_{z\in{\cal X}}h(z)\Big)\Big\} \label{can.tripl.b}\\
&\tag{c} &&\hspace{-.6cm} \varphi(x)\in {\cal U}(x) \ \mbox{attains the minimum in both \eqref{can.tripl.a} and \eqref{can.tripl.b}, that is,} \label{can.tripl.c}\\
& &&  \quad \rho(x)=\int_{{\cal X}}\rho(z)Q^o(dz|x,\varphi)+\frac{R}{2}\Big(\sup_{z\in{\cal X}}\rho(z)-\inf_{z\in{\cal X}}\rho(z)\Big)\label{can.tripl.c1}\\
& && \quad \rho(x)+h(x)=f(x,\varphi)+\int_{ {\cal X}}h(z)Q^o(dz|x,\varphi)+\frac{R}{2}\Big(\sup_{z\in{\cal X}}h(z)-\inf_{z\in{\cal X}}h(z)\Big)\label{can.tripl.c2}
\end{alignat}
or, equivalently, $(\rho,h,\varphi)$ is a canonical triplet if and only if for every $x\in\cal X$ the following hold.
\begin{alignat}{2}
&\tag{a'} &&\hspace{-.7cm} \rho(x)=\inf_{u\in {\cal U}(x)}\sup_{Q(\cdot|x,u)\in \mathbf{B}_R(Q^o)(x,u)}\int_{{\cal X}}\rho(z)Q(dz|x,u) \label{can.tripl.equiv.a}\\
&\tag{b'} &&\hspace{-.7cm} \rho(x)+h(x)=\inf_{u\in {\cal U}(x)}\sup_{Q(\cdot|x,u)\in \mathbf{B}_R(Q^o)(x,u)}\Big\{f(x,u){+}\int_{ \mathclap{\ {\cal X}}}\ h(z)Q(dz|x,u)\Big\}\label{can.tripl.equiv.b}\\
&\tag{c'} &&\hspace{-.7cm} \varphi(x)\in {\cal U}(x) \ \mbox{attains the minimum in \eqref{can.tripl.equiv.a} and \eqref{can.tripl.equiv.b}, that is,} \label{can.tripl.equiv.c}\\
& &&\quad \rho(x)=\sup_{Q(\cdot|x,u)\in \mathbf{B}_R(Q^o)(x,u)}\int_{{\cal X}}\rho(z)Q(dz|x,\varphi)\label{can.tripl.equiv.c1}\\
& && \quad \rho(x)+h(x)=\sup_{Q(\cdot|x,u)\in \mathbf{B}_R(Q^o)(x,u)}\Big\{f(x,\varphi)+\int_{ {\cal X}} h(z)Q(dz|x,\varphi)\Big\}\label{can.tripl.equiv.c2}
\end{alignat} 
\end{theorem}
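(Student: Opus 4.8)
The plan is to follow the classical canonical-triplet argument (cf.\ \cite{Yushkevich1973,hern1996discrete}), modified to accommodate the extra oscillator semi-norm term. Write $\mathrm{osc}(V)\triangleq\sup_{z\in{\cal X}}V(z)-\inf_{z\in{\cal X}}V(z)$ and, under Assumption~\ref{assump.cost}, let $(TV)(x)\triangleq\inf_{u\in{\cal U}(x)}\big\{f(x,u)+\int_{\cal X}V(z)Q^o(dz|x,u)+\tfrac{R}{2}\,\mathrm{osc}(V)\big\}$ and $(T_\varphi V)(x)\triangleq f(x,\varphi(x))+\int_{\cal X}V(z)Q^o(dz|x,\varphi(x))+\tfrac{R}{2}\,\mathrm{osc}(V)$. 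By the maximization formula \eqref{f2n} applied with $\ell=\rho$ (respectively $\ell=h$) and $\mu=Q^o(\cdot|x,u)$ --- the term $f(x,u)$ being inert under the inner supremum --- equations \eqref{can.tripl.a}--\eqref{can.tripl.c} and \eqref{can.tripl.equiv.a}--\eqref{can.tripl.equiv.c} are pointwise equivalent, so it suffices to work with \eqref{can.tripl.a}--\eqref{can.tripl.c}. By the finite-horizon minimax dynamic programming recursion (established under Assumption~\ref{assump.cost} in \cite{2014arXiv1402.1009T,ctc2012}; see also \cite{hern1996discrete}), $J_n^*(\cdot,h)=T^n h$ with $J_0^*(\cdot,h)=h$, and $J_n(\varphi^\infty,\cdot,h)=T_\varphi^{\,n}h$; moreover $T$ and $T_\varphi$ map bounded, continuous, non-negative functions into themselves, and $T(V+c)=c+T(V)$, $T_\varphi(V+c)=c+T_\varphi(V)$ for every constant $c$.

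\emph{Necessity.}
Suppose $(\rho,h,\varphi)$ is a canonical triplet, so $T^n h=T_\varphi^{\,n}h=n\rho+h$ for all $n\ge 0$. Taking $n=1$ gives $\rho+h=Th=\inf_{u\in{\cal U}(\cdot)}\big\{f(\cdot,u)+\int_{\cal X} h\,Q^o(\cdot|\cdot,u)+\tfrac{R}{2}\mathrm{osc}(h)\big\}$, which is \eqref{can.tripl.b}, while $T_\varphi h=\rho+h$ says that $\varphi(x)$ attains this infimum, that is, \eqref{can.tripl.c2}. To obtain \eqref{can.tripl.a}, rewrite $T^n h=T\big((n-1)\rho+h\big)=n\rho+h$, divide by $n$, and let $n\to\infty$. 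Because the suprema and infima of $\rho$ and $h$ over ${\cal X}$ are attained (the standing hypothesis; automatic under Assumption~\ref{assump.cost} when ${\cal X}$ is compact), one has $\tfrac{1}{n}\mathrm{osc}\big((n-1)\rho+h\big)\to\mathrm{osc}(\rho)$; since this term does not involve $u$, while $\tfrac{1}{n}\big(f(x,u)+\int_{\cal X} h\,Q^o(\cdot|x,u)\big)\to 0$ and $\tfrac{n-1}{n}\int_{\cal X}\rho\,Q^o(\cdot|x,u)\to\int_{\cal X}\rho\,Q^o(\cdot|x,u)$ uniformly in $u\in{\cal U}(x)$, the infimum passes to the limit and yields $\rho(x)=\inf_{u\in{\cal U}(x)}\big\{\int_{\cal X}\rho\,Q^o(\cdot|x,u)+\tfrac{R}{2}\mathrm{osc}(\rho)\big\}$, which is \eqref{can.tripl.a}; the same computation with $T_\varphi$ in place of $T$ gives \eqref{can.tripl.c1}, completing \eqref{can.tripl.c}.

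\emph{Sufficiency.}
Conversely, assume \eqref{can.tripl.a}--\eqref{can.tripl.c}. The structural point is that \eqref{can.tripl.a} forces $\rho$ to be constant whenever $R>0$: evaluating it at a minimizer $\underline{x}$ of $\rho$ (which exists by hypothesis) gives $\rho(\underline{x})\ge\rho(\underline{x})+\tfrac{R}{2}\mathrm{osc}(\rho)$, hence $\mathrm{osc}(\rho)=0$ (for $R=0$ the statement reduces to the classical canonical-triplet characterization of \cite{hern1996discrete}, with no oscillator terms). With $\rho\equiv\mathrm{const}$, \eqref{can.tripl.b} reads $Th=\rho+h$, whence $T^n h=T\big((n-1)\rho+h\big)=(n-1)\rho+Th=n\rho+h$ by induction on $n$, using $T(V+c)=c+T(V)$; thus $J_n^*(x,h)=n\rho(x)+h(x)$. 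Likewise \eqref{can.tripl.c2} reads $T_\varphi h=\rho+h$, so $T_\varphi^{\,n}h=n\rho+h$ by the same induction, giving $J_n(\varphi^\infty,x,h)=n\rho(x)+h(x)$; comparing with $J_n^*$ shows that $\varphi^\infty$ attains $\inf_{g\in G}J_n(g,x,h)$ for every $n$. Hence $(\rho,h,\varphi)$ is a canonical triplet in the sense of Definition~\ref{dfn.canonical}.

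The step I expect to be the main obstacle is the asymptotic argument in the necessity part --- carrying the oscillator semi-norm $\tfrac{R}{2}\mathrm{osc}\big((n-1)\rho+h\big)$ through both the limit $n\to\infty$ and the infimum over ${\cal U}(x)$ --- precisely because $V\mapsto\mathrm{osc}(V)$ is not linear, unlike the expectation term $\int_{\cal X}(\cdot)\,Q^o$. The argument is saved by two facts: this term does not depend on the control $u$, so it factors out of the infimum once the limit is controlled; and the hypothesis that the extrema of $\rho$ and $h$ are attained is exactly what delivers $\tfrac{1}{n}\mathrm{osc}\big((n-1)\rho+h\big)\to\mathrm{osc}(\rho)$. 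A secondary point is the identification $J_n^*(\cdot,h)=T^n h$ and $J_n(\varphi^\infty,\cdot,h)=T_\varphi^{\,n}h$ for the minimax problem with a per-stage total-variation constraint, which I would quote from \cite{2014arXiv1402.1009T,ctc2012} rather than reprove here.
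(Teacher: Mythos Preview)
Your necessity argument coincides with the paper's: both feed $J_n^*=n\rho+h$ into the one-step recursion, read off (b) at $n=0$, and recover (a) by dividing by $n$ and letting $n\to\infty$ using boundedness; you are simply more explicit about why $\tfrac{1}{n}\,\mathrm{osc}\big((n-1)\rho+h\big)\to\mathrm{osc}(\rho)$.

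For sufficiency you take a genuinely different route. The paper runs an induction on $n$, sandwiching $J_{n+1}^*(x,h)$ between an upper bound (evaluate at $\varphi$ and use (c)) and a lower bound obtained by passing to the maximizing kernel $Q^*$, splitting $\int(n\rho+h)\,Q^*=\int h\,Q^*+n\int\rho\,Q^*$, applying $\inf(A+B)\ge\inf A+\inf B$, and then invoking (a) and (b). You instead observe that (a) already forces $\rho$ to be constant whenever $R>0$ --- evaluating (a) at a minimizer of $\rho$ yields $0\ge\tfrac{R}{2}\,\mathrm{osc}(\rho)$ --- after which the whole induction collapses to the one-line shift identity $T(V+c)=c+T(V)$. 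Your argument is shorter and surfaces a structural fact that the paper only records after the theorem (cf.\ \eqref{extr.case1}--\eqref{extr.case11}) as a special case rather than a consequence of (a). The paper's route stays closer to the classical canonical-triplet argument of \cite{hern1996discrete} and treats all $R\in[0,2]$ in one pass, but its lower-bound step --- rewriting $\int h\,Q^*$ and $\int\rho\,Q^*$ as $\int h\,Q^o+\tfrac{R}{2}\,\mathrm{osc}(h)$ and $\int\rho\,Q^o+\tfrac{R}{2}\,\mathrm{osc}(\rho)$ --- tacitly uses that the maximizer $Q^*$ for $n\rho+h$ agrees with the maximizers for $h$ and for $\rho$ separately, which is precisely what your constancy observation (or the degenerate case $R=0$) supplies.
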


Note that, if $(\rho,h,\varphi)$ is a canonical triplet, then so is $(\rho,h+N,\varphi)$ for any constant $N$. Next we proceed with the proof of Theorem \ref{theorme}.

%

\begin{proof}
(Necessity). Suppose that $(\rho,h,\varphi)$ is a canonical triplet, i.e., \eqref{canonical.triplet.eq} holds $\forall x\in\cal X$ and $n\geq 0$. From the analog of dynamic programming equation \eqref{eq.inf.average.dp.equiv} of Borel spaces, we have that
\begin{equation}\label{dp1.back.can.tripl}
V_j(x)=\inf_{u\in{\cal U}(x)}\Big\{f(x,u)+\int_{{\cal X}}V_{j+1}(z)Q^o(dz|x,u)+\frac{R}{2}\Big(\sup_{z\in {\cal X}}V_{j+1}(z)-\inf_{z\in {\cal X}}V_{j+1}(z)\Big)\Big\}.
\end{equation}
Define $\overline{V}_j(x)=V_{n-j}(x)$, ($j=0,1,\dots,n$). Then \eqref{dp1.back.can.tripl} may be written in the ``forward" form
\begin{equation}\label{dp1.forward.can.tripl}
\overline{V}_{j+1}(x)=\inf_{u\in{\cal U}(x)}\Big\{f(x,u)+\int_{{\cal X}}\overline{V}_{j}(z)Q^o(dz|x,u)+\frac{R}{2}\Big(\sup_{z\in {\cal X}}\overline{V}_{j}(z)-\inf_{z\in {\cal X}}\overline{V}_{j}(z)\Big)\Big\}.
\end{equation}
Substituting \eqref{dp1.forward.can.tripl} to \eqref{gen.finite.cost}-\eqref{gen.infinite.cost}, we have
\begin{multline}\label{proof.eq.a}
J_{n+1}^*(x,h)
=\inf_{u\in{\cal U}(x)}\Big\{f(x,u)+\int_{{\cal X}}J_{n}^*(z,h)Q^o(dz|x,u)\\
+\frac{R}{2}\Big(\sup_{z\in {\cal X}}J_{n}^*(z,h)-\inf_{z\in {\cal X}}J_{n}^*(z,h)\Big)\Big\}.
\end{multline}
Thus, from \eqref{canonical.triplet.eq} we have
\begin{multline}\label{star1}
(n+1)\rho(x)+h(x)=\inf_{u\in{\cal U}(x)}\Big\{ f(x,u)+\int_{ \cal X}\big(n\rho(z)+h(z)\big)Q^0(dz|x,u)\\
+\frac{R}{2}\Big(\sup_{z\in {\cal X}}\big(n\rho(z)+h(z)\big)-\inf_{z\in {\cal X}}\big(n\rho(z)+h(z)\big)\Big)\Big\}.
\end{multline}
Evaluating \eqref{star1} at $n=0$ we obtain \eqref{can.tripl.b}. Furthermore, since $\rho(\cdot)$, $h(\cdot)$ and $f(\cdot,\cdot)$ are bounded, then multiplying both sides of \eqref{star1} by $1/n$ and letting $n\longrightarrow \infty$ yields \eqref{can.tripl.a}.

Finally, for any deterministic stationary policy $g^{\infty}\in G_{SM}$, we have that
 \begin{multline}\label{star2}
J_{n+1}(g^{\infty},x,h)
=f(x,\varphi)+\int_{{\cal X}}J_{n}(g^{\infty},z,h)Q^o(dz|x,\varphi)\\
+\frac{R}{2}\Big(\sup_{z\in {\cal X}}J_{n}(g^{\infty},z,h)-\inf_{z\in {\cal X}}J_{n}(g^{\infty},z,h)\Big),\quad x\in {\cal X}.
\end{multline}
 Thus, if $\varphi\in \mathbb{F}$ satisfies \eqref{canonical.triplet.eq}, then by \eqref{proof.eq.a}-\eqref{star2} we have that
\begin{multline*}
(n+1)\rho(x)+h(x)
=f(x,\varphi)+\int_{{\cal X}}\big(n\rho(z)+h(z)\big)Q^o(dz|x,\varphi)\\
+\frac{R}{2}\Big(\sup_{z\in {\cal X}}\big(n\rho(z)+h(z)\big)-\inf_{z\in {\cal X}}\big(n\rho(z)+h(z)\big)\Big)
\end{multline*}
which, as before, gives \eqref{can.tripl.c1} and \eqref{can.tripl.c2}.

(Sufficiency). Conversely, suppose $(\rho,h,\varphi)$ satisfy \eqref{can.tripl.a}-\eqref{can.tripl.c}. 
Proceeding by induction equation \eqref{canonical.triplet.eq} is trivially satisfied when $n=0$. Suppose that is true for some $n\geq 0$.  Then,  the following is obtained
\begin{eqnarray*}
J_{n+1}^*(x,h)&=&\inf_{u\in{\cal U}(x)}\Big\{f(x,u)+\int_{ \cal X}\big(n\rho(z)+h(z)\big)Q^o(dz|x,u)\\
&&+\frac{R}{2}\Big(\sup_{z\in {\cal X}}\big(n\rho(z)+h(z)\big)-\inf_{z\in {\cal X}}\big(n\rho(z)+h(z)\big)\Big)\Big\}\\
&=&\inf_{u\in{\cal U}(x)}\Big\{f(x,u)+\int_{ \cal X}\big(n\rho(z)+h(z)\big)Q^*(dz|x,u)\Big\}\\
&\geq & \inf_{u\in{\cal U}(x)}\Big\{f(x,u)+\int_{ \cal X}h(z)Q^*(dz|x,u)\Big\}+n\inf_{u\in{\cal U}(x)}\Big\{\int_{ \cal X}\rho(z)Q^*(dz|x,u)\Big\}\\
&=& \inf_{u\in{\cal U}(x)}\Big\{f(x,u){+}\int_{ \cal X}h(z)Q^o(dz|x,u){+}\frac{R}{2}\Big(\sup_{z\in {\cal X}}h(z){-}\inf_{z\in {\cal X}}h(z)\Big)\Big\}\\
&&+n\inf_{u\in{\cal U}(x)}\Big\{\int_{ \cal X}\rho(z)Q^o(dz|x,u)+\frac{R}{2}\Big(\sup_{z\in {\cal X}}\rho(z)-\inf_{z\in {\cal X}}\rho(z)\Big)\Big\}\\
&=&(n+1)\rho(x)+h(x).
\end{eqnarray*}
On the other hand,
\begin{eqnarray*}
J_{n+1}^*(x,h)&\leq &J_{n+1}(g^{\infty},x,h)\\
&=&f(x,\varphi)+\int_{ \cal X}\big(n\rho(z)+h(z)\big)Q^o(dz|x,\varphi)\\
&&+\frac{R}{2}\Big(\sup_{z\in {\cal X}}\big(n\rho(z)+h(z)\big)-\inf_{z\in {\cal X}}\big(n\rho(z)+h(z)\big)\Big)\\
&=&f(x,\varphi)+\int_{ \cal X}\big(n\rho(z)+h(z)\big)Q^*(dz|x,\varphi)    \\
&=& f(x,\varphi)+\int_{ \cal X}h(z)Q^*(dz|x,\varphi)\Big\}+n\int_{ \cal X}\rho(z)Q^*(dz|x,\varphi)\\
&=&f(x,\varphi){+}\int_{ \cal X}h(z)Q^o(dz|x,\varphi){+}\frac{R}{2}\Big(\sup_{z\in {\cal X}}h(z){-}\inf_{z\in {\cal X}}h(z)\Big)\\
&&+n\Big\{\int_{ \cal X}\rho(z)Q^o(dz|x,\varphi)+\frac{R}{2}\Big(\sup_{z\in {\cal X}}\rho(z)-\inf_{z\in {\cal X}}\rho(z)\Big)\Big\}\\
&=&(n+1)\rho(x)+h(x)
\end{eqnarray*}
where the second and third equalities follow by applying (\ref{f2n}). This implies, $J_{n+1}^*(x,h)=J_{n+1}(g^{\infty},x,h)=(n+1)\rho(x)+h(x)$.
\end{proof}

\begin{remark}
We note that in Definition \ref{dfn.canonical}, the condition $(\rho,h)$ are bounded continuous and non-negative can be relaxed to continuous and non-negative. In this case, if \eqref{canonical.triplet.eq} holds, i.e., $(\rho,h,\varphi)$ is a canonical triplet then \eqref{can.tripl.a}-\eqref{can.tripl.c} hold.
\end{remark}

Due to the fact that the average cost as an optimality criterion is underselective, i.e., with limitations in distinguishing optimal policies with different costs, we introduce next a more selective criterion. For other underselective and overselective optimality criteria see \cite{Flynn1976,Flynn1980}.

\begin{definition}\label{opt.policies}
A policy $g^\dagger$ is said to be
\begin{itemize}
\item[(a)] \cite{dynkin79} Strong average cost optimal if \begin{equation}
J(g^\dagger,x)\leq \liminf_{n\rightarrow \infty}\frac{1}{n}J_n(g,x),\quad \forall g\in G,\ x\in{\cal X}.
\end{equation}
\item[(b)] \cite{Flynn1980} F-strong average cost optimal if \begin{equation}
\lim_{n\rightarrow\infty}\frac{1}{n}\Big(J_n(g^\dagger,x)-J_n^*(x)\Big)=0,\quad \forall x\in{\cal X}
\end{equation} where $J_n^*(x)=\inf_{g\in G}J_n(g,x)$.\\
\end{itemize}
\end{definition}
Based on Definition \ref{opt.policies}, next we derive stronger results.

\begin{theorem} \cite{hern1996discrete}\label{main.th,triplet}
Suppose the cost function $f$ satisfies Assumption \ref{assump.cost}, and let $(\rho,h,\varphi)$ be a canonical triplet (with $h$ not necessarily bounded).
\begin{enumerate}
\item[(a)] If for every $g\in G$ and $x\in\cal X$
\begin{equation}\label{main.th.part.aa1} 
\lim_{n\rightarrow \infty}\mathbb{E}_x^{g,Q^*}\Big\{\frac{h(x_n)}{n}\Big\}=0
\end{equation}
then $g^{\infty}$ is an average cost optimal policy and $\rho$ is the average cost value function
\begin{equation}\label{main.th.part.a1} 
J^*(x)=\rho(x)=J(g^{\infty},x)=\lim_{n\rightarrow\infty}\frac{1}{n}J_n(g^{\infty},x),\quad \forall x.
\end{equation}
\item[(b)] If for every $x\in\cal X$ 
\begin{equation}\label{main.th.part.b1}
\lim_{n\rightarrow \infty}\sup_{g\in G}\mathbb{E}_x^{g,Q^*}\Big\{\frac{h(x_n)}{n}\Big\}=0
\end{equation}
then $g^{\infty}$ is strong average cost optimal and F-strong average cost optimal and
\begin{equation}
J^*(x)=\lim_{n\rightarrow\infty}\frac{1}{n}J_n^*(x).
\end{equation}
\een
\end{theorem}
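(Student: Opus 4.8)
The plan is to run the standard canonical-triplet argument (as in \cite{hern1996discrete}), adapted to the minimax setting by replacing expectations under the nominal kernel by expectations under the maximizing kernel $Q^*$. The starting point is the defining identity of a canonical triplet, $J_n(g^{\infty},x,h)=J_n^*(x,h)=n\rho(x)+h(x)$ for all $n\geq 0$ and $x\in{\cal X}$, together with the additive decomposition \eqref{gen.finite.cost}, $J_n(g,x,h)=J_n(g,x)+\mathbb{E}_x^{g,Q^*}\{h(x_n)\}$, and the fact that $h\geq 0$ forces $J_n(g,x)\leq J_n(g,x,h)$ for every $g$. One then divides by $n$, lets $n\to\infty$, and uses \eqref{main.th.part.aa1} and \eqref{main.th.part.b1} to annihilate the terminal-cost remainders.

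For part (a), I would first combine the canonical-triplet identity for $g^{\infty}$ with \eqref{gen.finite.cost} to write $\frac1n J_n(g^{\infty},x)=\rho(x)+\frac{h(x)}{n}-\frac1n\mathbb{E}_x^{g^{\infty},Q^*}\{h(x_n)\}$; by \eqref{main.th.part.aa1} the limit exists and $\lim_n\frac1n J_n(g^{\infty},x)=\rho(x)$, so $J(g^{\infty},x)=\rho(x)$ by \eqref{payofff}. For the matching lower bound, for arbitrary $g\in G$ I would use $J_n(g,x,h)\geq J_n^*(x,h)=n\rho(x)+h(x)$ together with \eqref{gen.finite.cost} to get $\frac1n J_n(g,x)\geq\rho(x)+\frac{h(x)}{n}-\frac1n\mathbb{E}_x^{g,Q^*}\{h(x_n)\}$; taking $\liminf_n$ and invoking \eqref{main.th.part.aa1} (and $h\geq 0$) yields $\liminf_n\frac1n J_n(g,x)\geq\rho(x)$, hence $J(g,x)\geq\rho(x)$. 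Combining the two bounds gives that $g^{\infty}$ is average cost optimal and establishes \eqref{main.th.part.a1}.

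For part (b), I would first observe that $0\leq\mathbb{E}_x^{g,Q^*}\{h(x_n)/n\}\leq\sup_{g\in G}\mathbb{E}_x^{g,Q^*}\{h(x_n)/n\}$, so \eqref{main.th.part.b1} implies \eqref{main.th.part.aa1} for every $g$; part (a) then gives $J^*(x)=\rho(x)=J(g^{\infty},x)$, and from the lower bound already derived, $J(g^{\infty},x)=\rho(x)\leq\liminf_n\frac1n J_n(g,x)$ for all $g\in G$, which is strong average cost optimality. For the F-strong property and $J^*(x)=\lim_n\frac1n J_n^*(x)$, I would sandwich $J_n^*(x)$: since $h\geq 0$ gives $J_n(g,x)\leq J_n(g,x,h)$, taking the infimum over $g$ gives $J_n^*(x)\leq J_n^*(x,h)=n\rho(x)+h(x)$; conversely $J_n(g,x,h)=J_n(g,x)+\mathbb{E}_x^{g,Q^*}\{h(x_n)\}\leq J_n(g,x)+\sup_{g'\in G}\mathbb{E}_x^{g',Q^*}\{h(x_n)\}$, and taking the infimum over $g$ gives $n\rho(x)+h(x)=J_n^*(x,h)\leq J_n^*(x)+\sup_{g'\in G}\mathbb{E}_x^{g',Q^*}\{h(x_n)\}$. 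Dividing by $n$ and using \eqref{main.th.part.b1} forces $\lim_n\frac1n J_n^*(x)=\rho(x)=J^*(x)$; since also $\frac1n J_n(g^{\infty},x)\to\rho(x)$, we get $\frac1n\big(J_n(g^{\infty},x)-J_n^*(x)\big)\to 0$, i.e., F-strong optimality.

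The step I expect to be the main obstacle is the bookkeeping around the maximizing kernel $Q^*$. Because $Q^*$ is produced stage by stage from the value function, which carries the terminal cost $h$ via the recursion \eqref{proof.eq.a}, one must justify that a single $Q^*$ simultaneously governs the running-cost part $J_n(g,x)$ and the terminal-cost part $\mathbb{E}_x^{g,Q^*}\{h(x_n)\}$ in the decomposition — this is exactly the content of \eqref{gen.finite.cost} and of \eqref{proof.eq.a} — and, for part (b), to control $\mathbb{E}_x^{g,Q^*}\{h(x_n)\}$ uniformly in $g$, which is precisely what hypotheses \eqref{main.th.part.aa1}--\eqref{main.th.part.b1} supply. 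A secondary point is that $h$ is not assumed bounded, so the limits above are limits of possibly unbounded quantities and the interchanges with $\inf_g$ or $\sup_g$ must be performed only where a hypothesis licenses them; the residual measurable-selection and semicontinuity issues are handled by Assumption \ref{assump.cost} as in \cite{hern1996discrete}.
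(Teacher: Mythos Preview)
Your proposal is correct and follows essentially the same approach as the paper: both exploit the canonical-triplet identity $J_n(g^{\infty},x,h)=J_n^*(x,h)=n\rho(x)+h(x)$ together with the decomposition $J_n(g,x,h)=J_n(g,x)+\mathbb{E}_x^{g,Q^*}\{h(x_n)\}$, divide by $n$, and kill the terminal remainder via \eqref{main.th.part.aa1} or \eqref{main.th.part.b1}. The only cosmetic differences are that in part~(a) the paper takes $\limsup$ where you take $\liminf$ (both valid since the hypothesis is a full limit), and in part~(b) the paper bounds $J_n(g^{\infty},x)-J_n^*(x)$ directly by $\sup_g\mathbb{E}_x^{g,Q^*}\{h(x_n)\}-\mathbb{E}_x^{g^{\infty},Q^*}\{h(x_n)\}$ rather than sandwiching $J_n^*(x)$ first; the two routes are equivalent.
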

\begin{proof} 
(a)  From \eqref{gen.finite.cost}-\eqref{gen.infinite.cost} and the last equality in \eqref{canonical.triplet.eq}
\begin{eqnarray*}
n\rho(x)+h(x)&=&J_n^*(x,h)
\leq  J_n(g,x)
+\mathbb{E}_x^{g,Q^*}\Big\{h(x_n)\Big\}, \quad \forall g\in G, x\in\cal X.
\end{eqnarray*}
Hence, multiplying by $1/n$, taking the $\limsup$ as $n\rightarrow\infty$, by virtue of \eqref{main.th.part.aa1}, we have $\rho(x)\leq   J(g,x)$, $\forall g,x $
which implies 
\begin{equation}\label{main.thm.proof.triplet.eq1}
\rho(x)\leq  J^*(x),\quad \forall x.
\end{equation}
Furthermore, from \eqref{canonical.triplet.eq} again
\begin{equation}\label{main.thm.proof.triplet.eq11}
J_n(g^{\infty},x,h)=J_n(g^{\infty},x)+\mathbb{E}_x^{g^{\infty},Q^*}\{h(x_n)\}=n\rho(x)+h(x).
\end{equation}
Finally, multiplying both sides of \eqref{main.thm.proof.triplet.eq11} by $1/n$ and then taking both $\limsup$ and $\liminf$ as $n\rightarrow \infty$, we obtain the last two equalities in \eqref{main.th.part.a1}, which in turn, together with \eqref{main.thm.proof.triplet.eq1}, yield the first one since $J^*(x)\leq J(g^{\infty},x)$.

(b) The first equality in \eqref{canonical.triplet.eq} gives
\begin{equation} \label{main.thm.proof.triplet.eq2}
J_n^*(x,h)=J_n(g^{\infty},x)+\mathbb{E}_x^{g^{\infty},Q^*}\{h(x_n)\}.
\end{equation}
On the other hand, by \eqref{gen.finite.cost}-\eqref{gen.infinite.cost}
\begin{eqnarray*}
J_n^*(x,h)=\inf_{g\in G}\Big(J_n(g,x)+\mathbb{E}_x^{g,Q^*}\{h(x_n)\}\Big)\leq & J_n^*(x)+\sup_{g\in G}\mathbb{E}_x^{g,Q^*}\{h(x_n)\}.
\end{eqnarray*}
Thus,
\begin{equation}
0\leq J_n(g^{\infty},x)-J_n^*(x)\leq \sup_{g\in G}\mathbb{E}_x^{g,Q^*}\{h(x_n)\}-\mathbb{E}_x^{g^{\infty},Q^*}\{h(x_n)\}.
\end{equation}
Hence, if $h$ satisfies \eqref{main.th.part.b1}, then $g^{\infty}$ is F-strong average cost optimal.
Finally, to prove that $g^{\infty}$ is strong average cost optimal, we use \eqref{main.thm.proof.triplet.eq2} again to obtain 
\begin{equation*}
J_n(g^{\infty},x)+\mathbb{E}_x^{g^{\infty},Q^*}\{h(x_n)\}\leq J_n(g,x)+\mathbb{E}_x^{g,Q^*}\{h(x_n)\},\quad\forall g,x,n
\end{equation*}
so that from \eqref{main.th.part.b1}
\begin{equation}
\liminf_{n\rightarrow \infty}\frac{1}{n} J_n(g^{\infty},x)\leq \liminf_{n\rightarrow\infty} \frac{1}{n}J_n(g,x).
\end{equation}
Since the left-hand side equals to $J(g^{\infty},x)$ (see \eqref{main.th.part.a1}) it follows that $g^{\infty}$ is indeed strong average cost optimal and the proof is complete.
\end{proof}

Note that, in the case in which $\rho(\cdot)$ is constant, that is $\rho$ does not vary with $x$, then the first optimality equation of Theorem \ref{theorme} is redundant and hence \eqref{can.tripl.a}-\eqref{can.tripl.c} reduce to
\begin{eqnarray}
\quad \rho^*+h(x)&=&\inf_{u\in {\cal U}(x)}\Big\{f(x,u){+}\int_{ \mathclap{\ {\cal X}}}\ h(z)Q^o(dz|x,u){+}\frac{R}{2}\Big(\sup_{z\in{\cal X}}h(z){-}\inf_{z\in{\cal X}}h(z)\Big)\Big\}\label{extr.case1}\\
\rho^* +h(x)&=&f(x,\varphi){+}\int_{ \mathclap{\ {\cal X}}}\ h(z)Q^o(dz|x,\varphi){+}\frac{R}{2}\Big(\sup_{z\in{\cal X}}h(z){-}\inf_{z\in{\cal X}}h(z)\Big).\label{extr.case11}
\end{eqnarray}

Next, we use equations \eqref{can.tripl.equiv.a}-\eqref{can.tripl.equiv.c} of Theorem \ref{theorme} to develop a general policy iteration algorithm for average cost dynamic programming.
\subsection{General Policy Iteration Algorithm for Finite Alphabet Spaces}\label{subsec.inf_ave_gener_policy_iteration_alg}
In this section, we provide a policy iteration algorithm to obtain average cost optimal policies, in which 
 policy evaluation and policy improvement steps are evaluated using the maximizing conditional distribution given by \eqref{Q.matrix.limited.a}. The proposed algorithm is considerably more complex compared to Algorithm \ref{alg.pol.iter.aver.cost}. Nevertheless, it solves the MDP for all range of values of total variation parameter $R\in[0,2]$, and without imposing the irreducibility condition, as in Section \ref{subsec.inf_ave_policy_iteration_alg}.\\

\begin{algorithm}
(General policy iteration)
\ben
\item[1)] Let $m=0$ and select an arbitrary stationary Markov control policy $g_0:\cal X \longmapsto {\cal U}$.
\item[2)] (Policy Evaluation) Solve the equations
\begin{align}
J_{Q^o}(g_m)&=Q^o(g_m)J_{Q^o}(g_m)\label{general.pol.eval.J}\\
J_{Q^o}(g_m)+h_{Q^o}(g_m)&=f(g_m)+Q^o(g_m)h_{Q^o}(g_m)\label{general.pol.eval.V}
\end{align}
for $J_{Q^o}(g_m)$ and $h_{Q^o}(g_m)$. Identify the support sets of \eqref{general.pol.eval.V} using \eqref{Q.matrix.limited.b} (where $h$ replaces $V$), and construct the matrix $Q^{*}(g_m)$ using \eqref{Q.matrix.limited.a}. Solve the equations
\begin{align}
J_{Q^{*}}(g_m)&{=}Q^{*}(g_m)J_{Q^{*}}(g_m)\label{general.pol.eval.Q*.J}\\
J_{Q^{*}}(g_m){+}h_{Q^{*}}(g_m)&{=}f(g_m){+}Q^{*}(g_m)h_{Q^{*}}(g_m)\label{general.pol.eval.Q*.V}
\end{align}
 for $J_{Q^{*}}(g_m)$ and $h_{Q^{*}}(g_m)$.

\item[3)] (Policy Improvement) \\
a) Let \begin{equation}
g_{m+1}=\argmin_{g\in\mathbb{R}^{|{\cal X}|}} \big\{Q^{*}(g)J_{Q^{*}}(g_m)\big\}.
\end{equation} 
If $g_{m+1}=g_m$ go to step 3b); otherwise let $m=m+1$ and return to step 2.

b)  Let 
\begin{equation}
g_{m+1}=\argmin_{g\in\mathbb{R}^{|{\cal X}|}} \big\{f(g)+Q^{*}(g)h_{Q^{*}}(g_m)\big\}.
\end{equation} 
\item[4)] If $g_{m+1}=g_m$, let $g^*=g_m$; else let $m=m+1$ and return to step 2.  \\
\een
\label{alg.general.pol.iter.aver.cost}
\end{algorithm}

For MCP with finite state and action spaces the proposed general policy iteration algorithm converges in a finite number of iterations. However, for MCP on Borel spaces the proposed policy iteration algorithm might not converge, or it might converge to a suboptimal  value, and hence one must introduce additional assumptions (i.e., see \cite{Lerma-Lasserre1997,Meyn1997}).
In Section \ref{working example4}, we illustrate through an example how Algorithm \ref{alg.general.pol.iter.aver.cost} is applied.

\section{Examples}\label{sec.examples}
In this section we illustrate the new dynamic programming equations and the corresponding policy iteration algorithms through examples. In particular, in Section \ref{working example3} we present an application of the infinite horizon minimax problem for average cost by employing policy iteration Algorithm \ref{alg.pol.iter.aver.cost}, and in Section \ref{working example4} we present an application of the infinite horizon minimax problem for average cost by employing policy iteration Algorithm \ref{alg.general.pol.iter.aver.cost}. The essential difference between the two examples is that the MDP of the latter is described by a transition probability graph which is reducible.

\subsection{Infinite Horizon Minimax MDP - Policy Iteration Algorithm \ref{alg.pol.iter.aver.cost}}\label{working example3}
Here, we illustrate an application of the infinite horizon minimax problem for average cost, by considering the stochastic control system as shown in Fig.\ref{fig.exd.11a}, with state space ${\cal X}=\{1,2,3\}$ and control set ${\cal U}=\{u_1,u_2\}$. Assume that the nominal transition probabilities under controls $u_1$ and $u_2$ are given  by
\begin{eqnarray}\label{trans.prob.inf.hor.dmcm}
  Q^o(u_1){=}\frac{1}{9}\left(
                    \begin{array}{ccc}
                      3 & 1 & 5 \\
                      4 & 2 & 3 \\
                      1 & 6 & 2 \\
                    \end{array}
                  \right),\quad Q^o(u_2){=}\frac{1}{9}\left(
                    \begin{array}{ccc}
                      1 & 2 & 6 \\
                      4 & 2 & 3 \\
                      4 & 1 & 4 \\
                    \end{array}
                  \right)
\end{eqnarray}
the total variation distance radius is $R=6/9$, and the cost function under each state and action is \begin{equation*}
f(1,u_1)=2, \ f(2,u_1)=1,\  f(3,u_1)=3,\
 f(1,u_2)=0.5, \ f(2,u_2)=3, \ f(3,u_2)=0.
\end{equation*}
To obtain an optimal stationary policy of the infinite horizon minimax problem for average cost, policy iteration algorithm \ref{alg.pol.iter.aver.cost} is applied.

\noindent \textbf{A. Let $m=0$.} 

1) Select the initial policies as follows $g_0(1)=u_1$, $g_0(2)=u_2$, $g_0(3)=u_2$.

2) Solve the equation $J_{Q^o}(g_0)e+V_{Q^o}(g_0)=f(g_0)+Q^o(g_0)V_{Q^o}(g_0)$ for $J_{Q^o}(g_0)\in\mathbb{R}$ and $V_{Q^o}(g_0)\in\mathbb{R}^3$, which is given by
\begin{equation*}
  J_{Q^o}(g_0)\left(\begin{array}{c}
     1 \\
     1 \\
     1
   \end{array}\right)+\left(\begin{array}{c}
                                     V_{Q^o}(g_0,1) \\
                                     V_{Q^o}(g_0,2) \\
                                     V_{Q^o}(g_0,3)
                                   \end{array}\right) =\left(\begin{array}{c}
     2 \\
     3 \\
     0
   \end{array}\right)+\frac{1}{9}\left(
                              \begin{array}{ccc}
                                3 & 1 & 5 \\
                                4 & 2 & 3 \\
                                4 & 1 & 4 \\
                              \end{array}
                            \right)\left(\begin{array}{c}
                                     V_{Q^o}(g_0,1) \\
                                     V_{Q^o}(g_0,2) \\
                                     V_{Q^o}(g_0,3)
                                   \end{array}\right).
\end{equation*}
Since $V_{Q^o}(g_0)$ is uniquely determined up to an additive constant, let $V_{Q^o}(g_0,3)=0$. The solution is
\begin{equation*}
\left(\begin{array}{c}
                                     V_{Q^o}(g_0,1) \\
                                     V_{Q^o}(g_0,2) \\
                                     V_{Q^o}(g_0,3)
                                   \end{array}\right)=\left(\begin{array}{c}
                                     1.8 \\
                                     3.375 \\
                                     0
                                   \end{array}\right),\quad J_{Q^o}(g_0)=1.175.
\end{equation*}
Note that, $V_{Q^o}\triangleq\{V_{Q^o}(1),V_{Q^o}(2),V_{Q^o}(3)\}$, $|{\cal X}|=3$, and hence 
\begin{eqnarray*}
  {\cal X}^+\triangleq\{x\in{\cal X}:V_{Q^o}(x)&=&\max\{V_{Q^o}(x):x\in{\cal X}\}\}\\
  &=&\{x\in{\cal X}:V_{Q^o}(x)=V_{Q^o}(2)\}=\{2\}\\
  {\cal X}^-\triangleq  \{x\in{\cal X}:V_{Q^o}(x)&=&\min\{V_{Q^o}(x):x\in{\cal X}\}\}\\
  &=&\{x\in{\cal X}:V_{Q^o}(x)=V_{Q^o}(3)\}=\{3\}\\
  {\cal X}_1\triangleq \{x\in{\cal X}:V_{Q^o}(x)&=&\min\{V_{Q^o}(\alpha):\alpha\in {\cal X}{\setminus} {\cal X}^+{\cup}{\cal X}^-\}\}\\
  &=&\{x\in{\cal X}:V_{Q^o}(x)=V_{Q^o}(1)\}=\{1\}.
\end{eqnarray*}
Once the partition is been identified, (\ref{Q.matrix.limited.a}) is applied to obtain (\ref{initQ1}) and (\ref{initQ2}).
\begin{eqnarray}
Q^*(u_1)&=&\begin{pmatrix}
                     \left(q^o_{11}(u_1)-\left(\frac{R}{2}-q^o_{13}(u_1)\right)^+\right)^+ & \min\left(1,q^o_{12}(u_1)+\frac{R}{2}\right) & \left(q^o_{13}(u_1)-\frac{R}{2}\right)^+ \\
                     \left(q^o_{21}(u_1)-\left(\frac{R}{2}-q^o_{23}(u_1)\right)^+\right)^+ & \min\left(1,q^o_{22}(u_1)+\frac{R}{2}\right) & \left(q^o_{23}(u_1)-\frac{R}{2}\right)^+ \\
                     \left(q^o_{31}(u_1)-\left(\frac{R}{2}-q^o_{33}(u_1)\right)^+\right)^+ & \min\left(1,q^o_{32}(u_1)+\frac{R}{2}\right) & \left(q^o_{33}(u_1)-\frac{R}{2}\right)^+ 
                   \end{pmatrix} \nonumber\\ [-1.5ex]\label{initQ1}\\[-1.5ex]&=&\frac{1}{9}
                              \begin{pmatrix}
                                3 & 4 & 2 \\
                                4 & 5 & 0 \\
                                0 & 9 & 0 
                              \end{pmatrix}.\nonumber \\ \nonumber\\
Q^*(u_2)&=&\begin{pmatrix}
\left(q^o_{11}(u_2)-\left(\frac{R}{2}-q^o_{13}(u_2)\right)^+\right)^+ & \min\left(1,q^o_{12}(u_2)+\frac{R}{2}\right) & \left(q^o_{13}(u_2)-\frac{R}{2}\right)^+ \\
                     \left(q^o_{21}(u_2)-\left(\frac{R}{2}-q^o_{23}(u_2)\right)^+\right)^+ & \min\left(1,q^o_{22}(u_2)+\frac{R}{2}\right) & \left(q^o_{23}(u_2)-\frac{R}{2}\right)^+ \\
                     \left(q^o_{31}(u_2)-\left(\frac{R}{2}-q^o_{33}(u_2)\right)^+\right)^+ & \min\left(1,q^o_{32}(u_2)+\frac{R}{2}\right) & \left(q^o_{33}(u_2)-\frac{R}{2}\right)^+ 
\end{pmatrix}\nonumber\\ [-1.5ex]\label{initQ2}\\[-1.5ex]&=&\frac{1}{9}\begin{pmatrix}
1 & 5 & 3 \\
4 & 5 & 0 \\
4 & 4 & 1 
\end{pmatrix}.\nonumber
\end{eqnarray}
The transition probability graph of $Q^*$, under controls $u_1$ and $u_2$, is depicted in Fig.\ref{ch.dyn.work.ex.3.fig2}. Note that, since every state can reach every other state, matrix $Q^*(u)$ remains irreducible under both controls.
\begin{figure}[!htbp]
\centering
\subfloat[Matrix $Q^*$ under control $u_1$.]{
\label{ch.dyn.work.ex.3.fig1.1} 
\includegraphics[height=4.9cm, width=.49\linewidth]{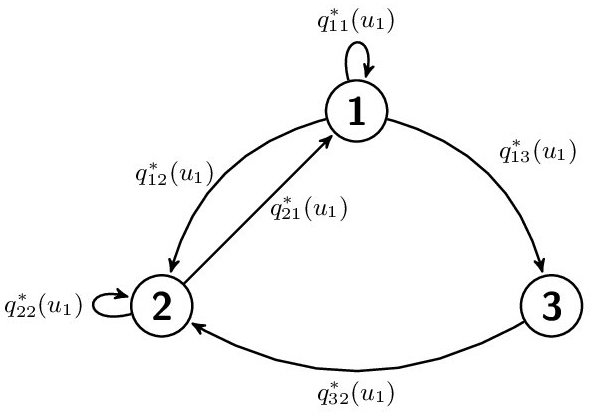}}
\subfloat[Matrix $Q^*$ under control $u_2$.]{
\label{ch.dyn.work.ex.3.fig1.2} 
\includegraphics[height=4.9cm,width=.49\linewidth]{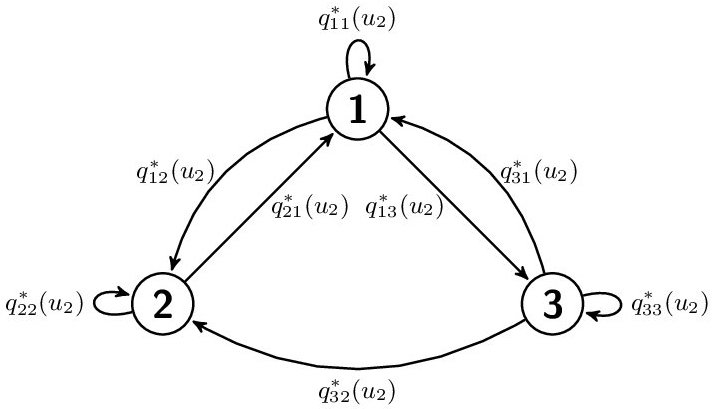}}\vspace{-.2cm}
\caption[Transition Probability Graph of $Q^*$ under controls $u_1$ and $u_2$.]{Transition Probability Graph of $Q^*$ under controls $u_1$ and $u_2$.}
\label{ch.dyn.work.ex.3.fig2}
\end{figure}
Next, we proceed to solve the equation $J_{Q^*}(g_0)e+V_{Q^*}(g_0)=f(g_0)+Q^*(g_0)V_{Q^*}(g_0)$ for $J_{Q^*}(g_0)\in\mathbb{R}$ and $V_{Q^*}(g_0)\in\mathbb{R}^3$, which is given by
\begin{equation*}
  J_{Q^*}( g_0)\left(\begin{array}{c}
     1 \\
     1 \\
     1
   \end{array}\right)+\left(\begin{array}{c}
                                     V_{Q^*}(g_0,1) \\
                                     V_{Q^*}(g_0,2) \\
                                     V_{Q^*}(g_0,3)
                                   \end{array}\right) =\left(\begin{array}{c}
     2 \\
     3 \\
     0
   \end{array}\right)+\frac{1}{9}\left(
                              \begin{array}{ccc}
                                3 & 4 & 2 \\
                                4 & 5 & 0 \\
                                4 & 4 & 1 \\
                              \end{array}
                            \right)\left(\begin{array}{c}
                                     V_{Q^*}(g_0,1) \\
                                     V_{Q^*}(g_0,2) \\
                                     V_{Q^*}(g_0,3)
                                   \end{array}\right).
\end{equation*} 
Since $V_{Q^*}(g_0)$ is uniquely determined up to an additive constant, let $V_{Q^*}(g_0,3)=0$. The solution is
\begin{equation*}
\left(\begin{array}{c}
                                     V_{Q^*}(g_0,1) \\
                                     V_{Q^*}(g_0,2) \\
                                     V_{Q^*}(g_0,3)
                                   \end{array}\right)=\left(\begin{array}{c}
                                     1.8 \\
                                     3.375 \\
                                     0
                                   \end{array}\right),\quad J_{Q^*}(g_0)=2.3.
\end{equation*}

3) Let $g_1=\argmin_{g\in\mathbb{R}^3}\{f(g)+Q^*(g)V_{Q^*}(g_0)\}$. Then
\begin{eqnarray*}
g_1(1)&=&\argmin\Big\{f(1,u_1)+
q^*_{11}(u_1)V_{Q^*}(g_0,1){+}q^*_{12}(u_1)V_{Q^*}(g_0,2){+}q^*_{13}(u_1)V_{Q^*}(g_0,3),
\\
&&\qquad \qquad f(1,u_2)+q^*_{11}(u_2)V_{Q^*}(g_0,1){+}q^*_{12}(u_2)V_{Q^*}(g_0,2){+}q^*_{13}(u_2)V_{Q^*}(g_0,3)\Big\}\\
&=&\argmin\Big\{4.099,2.573\Big\}=\{2\}\Longrightarrow  g_1(1)=u_2.
\end{eqnarray*}
Following a similar procedure for the rest we obtain the following.
\begin{eqnarray*}
g_1(2)&=&\argmin\Big\{3.673,5.673\Big\}=\{1\}\Longrightarrow  g_1(2)=u_1\\
g_1(3)&=&\argmin\Big\{6.375,2.3\Big\}=\{2\}\Longrightarrow  g_1(3)=u_2.
\end{eqnarray*}
Since, $g_1\neq g_0$, let $m=1$ and return to step 2.\\

\noindent \textbf{B. Let $m=1$.}

2) Solve the equation $J_{Q^o}(g_1)e+V_{Q^o}(g_1)=f(g_1)+Q^o(g_1)V_{Q^o}(g_1)$, $V_{Q^o}(g_1,3)=0$, for $J_{Q^o}(g_1)\in\mathbb{R}$ and $V_{Q^o}(g_1)\in\mathbb{R}^3$. The solution is 
\begin{equation*}
\left(\begin{array}{c}
                                     V_{Q^o}(g_1,1) \\
                                     V_{Q^o}(g_1,2) \\
                                     V_{Q^o}(g_1,3)
                                   \end{array}\right)=\left(\begin{array}{c}
                                     0.468 \\
                                     1.125 \\
                                     0
                                   \end{array}\right),\quad J_{Q^o}(g_1)=0.333.
\end{equation*}
Therefore, ${\cal X}^+=\{2\}$, ${\cal X}^-=\{3\}$ and ${\cal X}_1=\{1\}$. Since the partition is the same as in $m=0$ then $Q^*(u_1)$ and $Q^*(u_2)$ are given by \eqref{initQ1} and \eqref{initQ2}, respectively.

Solve the equation $J_{Q^*}(g_1)e+V_{Q^*}(g_1)=f(g_1)+Q^*(g_1)V_{Q^*}(g_1)$, $V_{Q^*}(g_1,3)=0$, for $J_{Q^*}(g_1)\in\mathbb{R}$ and $V_{Q^*}(g_1)\in\mathbb{R}^3$. The solution is
\begin{equation*}
\left(\begin{array}{c}
                                     V_{Q^*}(g_1,1) \\
                                     V_{Q^*}(g_1,2) \\
                                     V_{Q^*}(g_1,3)
                                   \end{array}\right)=\left(\begin{array}{c}
                                     0.468 \\
                                     1.125 \\
                                     0
                                   \end{array}\right),\quad J_{Q^*}(g_1)=0.708.
\end{equation*}

3) Let $g_2=\argmin_{g\in\mathbb{R}^3}\{f(g)+Q^*(g)V_{Q^*}(g_1)\}$. Then
\begin{eqnarray*}
g_2(1)&=&\argmin\Big\{2.656,1.177\Big\}=\{2\}\Longrightarrow  g_2(1)=u_2\\
g_2(2)&=&\argmin\Big\{1.831,3.831\Big\}=\{1\}\Longrightarrow  g_2(2)=u_1\\
g_2(3)&=&\argmin\Big\{4.125,0.708\Big\}=\{2\}\Longrightarrow  g_2(3)=u_2.
\end{eqnarray*}

4) Since, $g_2=g_1$, then $g^*=g_1$ is an optimal control policy with $J_{Q^*}=0.708$, $V_{Q^*}(1)=0.468$, $V_{Q^*}(2)=1.125$ and $V_{Q^*}(3)=0$.

\subsection{Infinite Horizon Minimax MDP - General Policy Iteration Algorithm \ref{alg.general.pol.iter.aver.cost}}\label{working example4}
In this example, we illustrate an application of the infinite horizon minimax problem for average cost, by considering the stochastic control system shown in Fig.\ref{fig.exd.11a}, with ${\cal X}=\{1,2,3\}$ and control set ${\cal U}=\{u_1,u_2\}$. The essential difference between this example and the previous one, is that here, the stochastic control system under consideration is described by a transition probability graph which is reducible, and hence general policy iteration algorithm \ref{alg.general.pol.iter.aver.cost} is applied.

Assume that the nominal transition probabilities under controls $u_1$ and $u_2$ are given  by
\begin{eqnarray}\label{trans.prob.b.inf.hor.dmcm}
  Q^o(u_1){=}\frac{1}{9}\left(
                    \begin{array}{ccc}
                      0 & 5 & 4 \\
                      0 & 9 & 0 \\
                      0 & 0 & 9 \\
                    \end{array}
                  \right),\quad Q^o(u_2){=}\frac{1}{9}\left(
                    \begin{array}{ccc}
                      2 & 7 & 0 \\
                      3 & 6 & 0 \\
                      8 & 0 & 1 \\
                    \end{array}
                  \right)
\end{eqnarray}
the total variation distance radius is $R=14/9$, and the cost function under each state and action is
\begin{equation*}
  f(1,u_1)=2,\  f(2,u_1)=1,\ f(3,u_1)=3,\ f(1,u_2)=0.5,\  f(2,u_2)=3, \ f(3,u_2)=0.
\end{equation*}

 \noindent \textbf{A. Let $m=0$.} 

1) Select the initial policies as follows $g_0(1)=u_1$, $g_0(2)=u_1$, $g_0(3)=u_1$.

2) Solve the equation $J_{Q^o}(g_0)=Q^o(g_0)J_{Q^o}(g_0)$. The optimality equations \eqref{general.pol.eval.J} are 
\begin{subequations}\label{opt.eq.J.first}
\begin{eqnarray}
J_{Q^o}(g_0,1)&=&\frac{5}{9}J_{Q^o}(g_0,2)+\frac{4}{9}J_{Q^o}(g_0,3)\\
J_{Q^o}(g_0,2)&=&J_{Q^o}(g_0,2)\\
J_{Q^o}(g_0,3)&=&J_{Q^o}(g_0,3).
\end{eqnarray}
\end{subequations}
Next, solve the equation $J_{Q^o}(g_0)+h_{Q^o}(g_0)=f(g_0)+Q^o(g_0)h_{Q^o}(g_0)$, for $J_{Q^o}(g_0)\in\mathbb{R}^3$ and $h_{Q^o}(g_0)\in\mathbb{R}^3$. The optimality equations \eqref{general.pol.eval.V} are given by
\begin{subequations}\label{opt.eq.V.first}
\begin{eqnarray}
J_{Q^o}(g_0,1)+h_{Q^o}(g_0,1)&=&2+\frac{5}{9}h_{Q^o}(g_0,2)+\frac{4}{9}h_{Q^o}(g_0,3)\\
J_{Q^o}(g_0,2)+h_{Q^o}(g_0,2)&=&1+h_{Q^o}(g_0,2)\\
J_{Q^o}(g_0,3)+h_{Q^o}(g_0,3)&=&3+h_{Q^o}(g_0,3).
\end{eqnarray}
\end{subequations}
The solution of \eqref{opt.eq.J.first} and \eqref{opt.eq.V.first} has 
\begin{align*}
h_{Q^o}(g_0,1)&{=}\frac{1}{9}{+}\frac{5}{9}\alpha{+}\frac{4}{9}\beta,   &   h_{Q^o}(g_0,2)&{=}\alpha, &  h_{Q^o}(g_0,3)&{=}\beta,\\
J_{Q^o}(g_0,1)&{=}1.888,   &    J_{Q^o}(g_0,2)&{=}1, & J_{Q^o}(g_0,3)&{=}3.
\end{align*}
Setting $\alpha=1$ and $\beta=0$ (arbitrary constants) yields
\begin{equation*}
h_{Q^o}(g_0,1)=0.666,\quad h_{Q^o}(g_0,2)=1,\quad h_{Q^o}(g_0,3)=0.
\end{equation*}
Note that, $h_{Q^o}=\{h_{Q^o}(1),h_{Q^o}(2),h_{Q^o}(3)\}$, and hence the support sets based on the values of $h_{Q^o}$ are ${\cal X}^+=\{2\}$, ${\cal X}^-=\{3\}$ and ${\cal X}_1=\{1\}$. Once the partition is been identified, \eqref{Q.matrix.limited.a} is applied to obtain \eqref{Q.matrix.Vu1} and \eqref{Q.matrix.Vu2}.
\begin{eqnarray}
Q^{*}(u_1)&=&\begin{pmatrix}
                     \left(q^o_{11}(u_1)-\left(\frac{R}{2}-q^o_{13}(u_1)\right)^+\right)^+ & \min\left(1,q^o_{12}(u_1)+\frac{R}{2}\right) & \left(q^o_{13}(u_1)-\frac{R}{2}\right)^+ \\
                     \left(q^o_{21}(u_1)-\left(\frac{R}{2}-q^o_{23}(u_1)\right)^+\right)^+ & \min\left(1,q^o_{22}(u_1)+\frac{R}{2}\right) & \left(q^o_{23}(u_1)-\frac{R}{2}\right)^+ \\
                     \left(q^o_{31}(u_1)-\left(\frac{R}{2}-q^o_{33}(u_1)\right)^+\right)^+ & \min\left(1,q^o_{32}(u_1)+\frac{R}{2}\right) & \left(q^o_{33}(u_1)-\frac{R}{2}\right)^+ 
                   \end{pmatrix}\nonumber\\ [-1.5ex]\label{Q.matrix.Vu1}\\[-1.5ex] &=&\frac{1}{9}
                              \begin{pmatrix}
                                0 & 9 & 0 \\
                                0 & 9 & 0 \\
                                0 & 7 & 2 
                              \end{pmatrix}\nonumber \\ \nonumber\\
Q^{*}(u_2)&=&\begin{pmatrix}
\left(q^o_{11}(u_2)-\left(\frac{R}{2}-q^o_{13}(u_2)\right)^+\right)^+ & \min\left(1,q^o_{12}(u_2)+\frac{R}{2}\right) & \left(q^o_{13}(u_2)-\frac{R}{2}\right)^+ \\
                     \left(q^o_{21}(u_2)-\left(\frac{R}{2}-q^o_{23}(u_2)\right)^+\right)^+ & \min\left(1,q^o_{22}(u_2)+\frac{R}{2}\right) & \left(q^o_{23}(u_2)-\frac{R}{2}\right)^+ \\
                     \left(q^o_{31}(u_2)-\left(\frac{R}{2}-q^o_{33}(u_2)\right)^+\right)^+ & \min\left(1,q^o_{32}(u_2)+\frac{R}{2}\right) & \left(q^o_{33}(u_2)-\frac{R}{2}\right)^+ 
\end{pmatrix}\nonumber\\ [-1.5ex]\label{Q.matrix.Vu2}\\[-1.5ex]&=&\frac{1}{9}\begin{pmatrix}
0 & 9 & 0 \\
0 & 9 & 0 \\
2 & 7 & 0 
\end{pmatrix}.\nonumber
\end{eqnarray}

Next, solve the equation $J_{Q^{*}}(g_0)=Q^{*}(g_0)J_{Q^{*}}(g_0)$. The optimality equations \eqref{general.pol.eval.Q*.J} are 
\begin{subequations}\label{opt.eq.J.opt.first}
\begin{eqnarray}
J_{Q^{*}}(g_0,1)&=&J_{Q^{*}}(g_0,2)\\
J_{Q^{*}}(g_0,2)&=&J_{Q^{*}}(g_0,2)\\
J_{Q^{*}}(g_0,3)&=&\frac{7}{9}J_{Q^{*}}(g_0,2)+\frac{2}{9}J_{Q^{*}}(g_0,3)
\end{eqnarray}
\end{subequations}
and hence, $J_{Q^{*}}(g_0,1)=J_{Q^{*}}(g_0,2)=J_{Q^{*}}(g_0,3)$.

Next, solve the equation $J_{Q^{*}}(g_0)+h_{Q^{*}}(g_0)=f(g_0)+Q^{*}(g_0)h_{Q^{*}}(g_0)$, for $J_{Q^{*}}(g_0)\in\mathbb{R}^3$ and $h_{Q^{*}}(g_0)\in\mathbb{R}^3$. The optimality equations \eqref{general.pol.eval.Q*.V} are given by
\begin{subequations}\label{opt.eq.V.opt.first}
\begin{eqnarray}
J_{Q^{*}}(g_0,1)+h_{Q^{*}}(g_0,1)&=&2+h_{Q^{*}}(g_0,2)\\
J_{Q^{*}}(g_0,2)+h_{Q^{*}}(g_0,2)&=&1+h_{Q^{*}}(g_0,2)\\
J_{Q^{*}}(g_0,3)+\frac{7}{9} h_{Q^{*}}(g_0,3)&=&3+\frac{7}{9}h_{Q^{*}}(g_0,2).
\end{eqnarray}
\end{subequations}
The solution of \eqref{opt.eq.J.opt.first} and \eqref{opt.eq.V.opt.first} has
\begin{align*}
h_{Q^{*}}(g_0,1)&=1{+}\alpha, & h_{Q^{*}}(g_0,2)&=\alpha, & h_{Q^{*}}(g_0,3)&=\frac{18}{7}{+}\alpha,\\
J_{Q^{*}}(g_0,1)&=1, & J_{Q^{*}}(g_0,2)&=1, & J_{Q^{*}}(g_0,3)&=1.
\end{align*}
Setting $\alpha=1$ (arbitrary constant) yields
\begin{equation*}
h_{Q^{*}}(g_0,1)=2,\quad h_{Q^{*}}(g_0,2)=1,\quad h_{Q^{*}}(g_0,3)=3.57.
\end{equation*}

3) a) Since $J_{Q^{*}}(g_0,1)=J_{Q^{*}}(g_0,2)=J_{Q^{*}}(g_0,3)$, then clearly $g_1=g_0$ and we proceed to step 3b).

b)  Let $g_1=\argmin_{g\in\mathbb{R}^3}\{f(g)+Q^{*}(g)h_{Q^{*}}(g_0)\}$, then the resulting control policies are $g_1(1)=u_2$, $ g_1(2)=u_1$ and $g_1(3)=u_2$. Since $g_1\neq g_0$, let $m=1$ and return to step 2.\\

\noindent \textbf{B. Let $m=1$.}

2) Solve the equation $J_{Q^o}(g_1)=Q^o(g_1)J_{Q^o}(g_1)$. The optimality equations \eqref{general.pol.eval.J} are 
\begin{subequations}\label{opt.eq.J.second}
\begin{eqnarray}
J_{Q^o}(g_1,1)&=&\frac{2}{9}J_{Q^o}(g_1,1)+\frac{7}{9}J_{Q^o}(g_1,2)\\
J_{Q^o}(g_1,2)&=&J_{Q^o}(g_1,2)\\
J_{Q^o}(g_1,3)&=&\frac{8}{9}J_{Q^o}(g_1,1)+\frac{1}{9}J_{Q^o}(g_1,3)
\end{eqnarray}
\end{subequations}
and hence, $J_{Q^o}(g_1,1)=J_{Q^o}(g_1,2)=J_{Q^o}(g_1,3)$.

Next, solve the equation $J_{Q^o}(g_1)+h_{Q^o}(g_1)=f(g_1)+Q^o(g_1)h_{Q^o}(g_1)$, for $J_{Q^o}(g_1)\in\mathbb{R}^3$ and $h_{Q^o}(g_1)\in\mathbb{R}^3$. The optimality equations \eqref{general.pol.eval.V} are given by
\begin{subequations}\label{opt.eq.V.second}
\begin{eqnarray}
J_{Q^o}(g_1,1)+\frac{7}{9} h_{Q^o}(g_1,1)&=&0.5+\frac{7}{9}h_{Q^o}(g_1,2)\\
J_{Q^o}(g_1,2)+h_{Q^o}(g_1,2)&=&1+h_{Q^o}(g_1,2)\\
J_{Q^o}(g_1,3)+\frac{8}{9} h_{Q^o}(g_1,3)&=&\frac{8}{9}h_{Q^o}(g_1,1).
\end{eqnarray}
\end{subequations}
The solution of \eqref{opt.eq.J.second} and \eqref{opt.eq.V.second} has 
\begin{align*}
h_{Q^o}(g_1,1)&{=}\alpha+\frac{9}{8}, & h_{Q^o}(g_1,2)&{=}\alpha+\frac{99}{56}, & h_{Q^o}(g_1,3)&{=}\alpha,\\
J_{Q^o}(g_1,1)&{=}1, &  J_{Q^o}(g_1,2)&{=}1, & J_{Q^o}(g_1,3)&{=}1.
\end{align*}
Setting $\alpha=1$ (arbitrary constant) yields
\begin{equation*}
h_{Q^o}(g_1,1)=2.125,\quad h_{Q^o}(g_1,2)=2.76,\quad h_{Q^o}(g_1,3)=1.
\end{equation*}
Hence, we proceed with the identification of the support sets, which are ${\cal X}^+=\{2\}$, ${\cal X}^-=\{3\}$ and ${\cal X}_1=\{1\}$. Since the partition is the same as in $m=0$ then $Q^{*}(u_1)$ and $Q^{*}(u_2)$ are equal to \eqref{Q.matrix.Vu1} and \eqref{Q.matrix.Vu2}, respectively.

Next, solve the equation $J_{Q^{*}}(g_1)=Q^{*}(g_1)J_{Q^{*}}(g_1)$. The optimality equations \eqref{general.pol.eval.Q*.J} are 
\begin{subequations}\label{opt.eq.J.opt.second}
\begin{eqnarray}
J_{Q^{*}}(g_1,1)&=&J_{Q^{*}}(g_1,2)\\
J_{Q^{*}}(g_1,2)&=&J_{Q^{*}}(g_1,2)\\
J_{Q^{*}}(g_1,3)&=&\frac{2}{9}J_{Q^{*}}(g_1,1)+\frac{7}{9}J_{Q^{*}}(g_1,2)
\end{eqnarray}
\end{subequations}
and hence, $J_{Q^{*}}(g_1,1)=J_{Q^{*}}(g_1,2)=J_{Q^{*}}(g_1,3)$.

Next, solve the equation $J_{Q^{*}}(g_1)+h_{Q^{*}}(g_1)=f(g_1)+Q^{*}(g_1)h_{Q^{*}}(g_1)$, for $J_{Q^{*}}(g_1)\in\mathbb{R}^3$ and $h_{Q^{*}}(g_1)\in\mathbb{R}^3$. The optimality equations \eqref{general.pol.eval.Q*.V} are given by
\begin{subequations}\label{opt.eq.V.opt.second}
\begin{eqnarray}
J_{Q^{*}}(g_1,1)+h_{Q^{*}}(g_1,1)&=&0.5+h_{Q^{*}}(g_1,2)\\
J_{Q^{*}}(g_1,2)+h_{Q^{*}}(g_1,2)&=&1+h_{Q^{*}}(g_1,2)\\
J_{Q^{*}}(g_1,3)+h_{Q^{*}}(g_1,3)&=&\frac{2}{9}h_{Q^{*}}(g_1,1)+\frac{7}{9}h_{Q^{*}}(g_1,2).
\end{eqnarray}
\end{subequations}
The solution of \eqref{opt.eq.J.opt.second} and \eqref{opt.eq.V.opt.second} has 
\begin{align*}
h_{Q^{*}}(g_1,1)&{=}\alpha+\frac{11}{18}, & h_{Q^{*}}(g_1,2)&{=}\alpha+\frac{10}{9}, & h_{Q^{*}}(g_1,3)&{=}\alpha,\\
J_{Q^{*}}(g_1,1)&{=}1, & J_{Q^{*}}(g_1,2)&{=}1, & J_{Q^{*}}(g_1,3)&{=}1.
\end{align*}
Setting $\alpha=1$ yields
\begin{equation*}
h_{Q^{*}}(g_1,1)=1.611,\quad h_{Q^{*}}(g_1,2)=2.111,\quad h_{Q^{*}}(g_1,3)=1.
\end{equation*}

3) a) Since $J_{Q^{*}}(g_1,1)=J_{Q^{*}}(g_1,2)=J_{Q^{*}}(g_1,3)$, then clearly $g_2=g_1$ and we proceed to step 3b).

b) Let $g_2=\argmin_{g\in\mathbb{R}^3}\{f(g)+Q^*(g)h_{Q^*}(g_1)\}$, the resulting control policies are $g_2(1)=u_2$, $g_2(2)=u_1$ and $g_2(3)=u_2$.

4) Because, $g_2=g_1$, then $g^*=g_1$ is an optimal control policy with $J_{Q^{*}}(1)=J_{Q^{*}}(2)=J_{Q^{*}}(3)=1$, $h_{Q^{*}}(1)=1.611$, $h_{Q^{*}}(2)=2.111$ and $h_{Q^{*}}(3)=1$.

%
%
%
%
\section{Conclusions}\label{sec.conclusions}
In this paper, we examined the optimality of minimax MDP via dynamic programming on an infinite horizon, when the ambiguity class is described by the total variation distance between the conditional distribution of the true controlled process and the conditional distribution of a nominal controlled process. As optimality criterion we considered the average pay-off per unit time. Under the assumption that for every stationary Markov control policy the maximizing stochastic matrix is irreducible, we derived a new dynamic programming equation and a new policy iteration algorithm. However, due to the water-filling behavior of the maximizing conditional distribution, it turns out that our proposed method of solution is limited only to a specific range of values of total variation distance. To circumvent this limit, we consider general Borel spaces, and we derive a general dynamic programming equation by introducing a pair of dynamic programming equations, and, consequently a new policy iteration algorithm, which solve the minimax MDP for all $R\in[0,2]$. Finally, the application of our recommended policy iteration algorithms is shown via illustrative examples.

\bibliographystyle{siam}
\bibliography{ref}

\begin{thebibliography}{10}

\bibitem{Arapostathis93}
{\sc A.~Arapostathis, V.~.S. Borkar, E.~Fernandez-Gaucherand, M.~K. Ghosh, and
  S.~I. Marcus}, {\em Discrete-time controlled {M}arkov processes with average
  cost criterion: a survey}, SIAM J. Control Optim., 31 (1993), pp.~282--344.

\bibitem{Borkar84}
{\sc V.~S. {Borkar}}, {\em On minimum cost per unit time control of {M}arkov
  chains}, SIAM J. Control Optim., 22 (1984), pp.~965--978.

\bibitem{Borkar88}
\leavevmode\vrule height 2pt depth -1.6pt width 23pt, {\em Control of {M}arkov
  chains with long-run average cost criterion}, Stochastic Differential
  systems, Stochastic Control Theory and Applications,  (1988), pp.~57--77.

\bibitem{Borkar89}
\leavevmode\vrule height 2pt depth -1.6pt width 23pt, {\em Control of {M}arkov
  chains with long-run average cost criterion: the dynamic programming
  equations}, SIAM J. Control Optim., 27 (1989), pp.~642--657.

\bibitem{caines88}
{\sc P.~E. {Caines}}, {\em Linear stochastic systems}, John Wiley \& Sons,
  Inc., New York, 1988.

\bibitem{ctc2012}
{\sc C.D. Charalambous, I.~Tzortzis, and T.~Charalambous}, {\em Dynamic
  programming with total variational distance uncertainty}, in 51st IEEE
  Conference on Decision and Control, Maui, Hawaii, Dec. 10--13, 2012.

\bibitem{ctlthem2013j}
{\sc Charalambos~D. Charalambous, I.~Tzortzis, S.~Loyka, and T.~Charalambous},
  {\em {Extremum problems with total variation distance and their
  applications}}, IEEE Trans. Autom. Control, 59 (2014), pp.~2353--2368.

\bibitem{cover}
{\sc T.M. Cover and J.A. Thomas}, {\em Elements of information theory}, John
  Wiley and Sons, Inc., 1991.

\bibitem{dynkin79}
{\sc E.~B. {Dynkin} and A.~A. {Yushkevich}}, {\em Controlled {M}arkov
  processes}, Springer-Verlag, New York, 1979.

\bibitem{Flynn1976}
{\sc J.~{Flynn}}, {\em Conditions for the equivalence of optimality criteria in
  dynamic programming}, Ann. Statist., 4 (1976), pp.~936--953.

\bibitem{Flynn1980}
\leavevmode\vrule height 2pt depth -1.6pt width 23pt, {\em On optimality
  criteria for dynamic programs with long finite horizons}, J. Math. Anal.
  Appl., 76 (1980), pp.~202--208.

\bibitem{hern1996discrete}
{\sc O.~{Hernandez-Lerma} and J.~B. {Lasserre}}, {\em Discrete-time {M}arkov
  control processes: Basic optimality criteria}, no.~v. 1 in Applications of
  Mathematics Stochastic Modelling and Applied Probability, Springer Verlag,
  1996.

\bibitem{Lerma-Lasserre1997}
\leavevmode\vrule height 2pt depth -1.6pt width 23pt, {\em Policy iteration for
  average cost {M}arkov control processes on {B}orel spaces}, Acta Applicandae
  Mathematica, 47 (1997), pp.~125--154.

\bibitem{varayia86}
{\sc P.~R. {Kumar} and P.~{Varaiya}}, {\em Stochastic systems: Estimation,
  identification, and adaptive control}, Prentice Hall, 1986.

\bibitem{Meyn1997}
{\sc S.~P. {Meyn}}, {\em The policy improvement algorithm for markov decision
  processes with general state space}, IEEE Trans. Autom. Control, 42 (1997),
  pp.~1663--1680.

\bibitem{puterman94}
{\sc M.~L. {Puterman}}, {\em Markov decision Processes}, Wiley, New York, 1994.

\bibitem{Schal92}
{\sc M.~{Schal}}, {\em On the second optimality equation for semi-{M}arkov
  decision models}, Math. Op. Res., 17 (1992), pp.~470--486.

\bibitem{Sennott95}
{\sc L.~I. {Sennott}}, {\em Another set of conditions for average optimality in
  {M}arkov control processes}, Systems and Control Letters, 24 (1995),
  pp.~147--151.

\bibitem{2014arXiv1402.1009T}
{\sc I.~{Tzortzis}, C.~D. {Charalambous}, and T.~{Charalambous}}, {\em {Dynamic
  Programming Subject to Total Variation Distance Ambiguity}}, ArXiv e-prints,
  (2014).

\bibitem{vanSchuppen10}
{\sc J.~H. {Van Schuppen}}, {\em Mathematical control and system theory of
  discrete-time stochastic systems}, Preprint, 2014.

\bibitem{Yushkevich1973}
{\sc A.~A. {Yushkevich}}, {\em On a class of strategies in general {M}arkov
  decision models}, Theory Probab. Appl., 18 (1973), pp.~777--779.

\end{thebibliography}
\end{document}